\providecommand{\U}[1]{\protect\rule{.1in}{.1in}}
\newtheorem{theorem}{Theorem}
\newtheorem{corollary}[theorem]{Corollary}
\newtheorem{example}[theorem]{Example}
\newtheorem{proposition}[theorem]{Proposition}
\newenvironment{proof}[1][Proof]{\textbf{#1.} }{\ \rule{0.5em}{0.5em}}
\begin{document}

\author{Ognyan Kounchev
\and Hermann Render
\and Tsvetomir Tsachev}
\title{Inequalities for exponential polynomials with applications to moment sequences
\thanks{The work of all authors was funded under project KP-06-N52-1 with
Bulgarian NSF.} }
\date{}
\maketitle

\begin{abstract}
Let $\Phi_{\Lambda_{n}}$ be the unique solution of the differential operator
$L=\prod_{j=0}^{n}\left(  \frac{d}{dx}-\lambda_{j}\right)  $ such that
$\Phi_{\Lambda_{n}}^{\left(  j\right)  }\left(  0\right)  =0$ for
$j=0,...,n-1,$ and $\Phi_{\Lambda_{n}}^{\left(  n\right)  }\left(  0\right)
=1.$ Assume that $\Phi_{\Lambda_{n}}$ is real-valued and $\Phi_{\Lambda_{n}%
}^{\left(  n+1\right)  }\left(  x\right)  \geq0$ for all $x\in\left[
0,B\right]  .$ Then, if a polynomial $R\left(  x\right)  =%
{\displaystyle\sum_{k=0}^{n}}
a_{k}x^{k}$ is non-negative on the interval $\left[  0,B\right]  ,$ the
inequality
\[%
{\displaystyle\sum_{k=0}^{n}}
a_{k}k!\Phi_{\Lambda_{n}}^{\left(  n-k\right)  }\left(  x\right)  \geq
R\left(  x\right)
\]
holds for $x\in\left[  0,B\right]  $. From this we derive several interesting
inequalities for exponential polynomials. An important consequence is that for
a non-negative measure $\mu$ over the interval $\left[  a,b\right]  $ with
$b-a<B$ the sequence defined by
\[
s_{k}:=\int_{a}^{b}k!\Phi_{\Lambda_{n}}^{\left(  n-k\right)  }\left(
x-a\right)  d\mu\left(  x\right)
\]
for $k=0,...,n$ is a moment sequence, i.e. there exists a non-negative measure
$\nu$ with support in $\left[  a,b\right]  $ such that $s_{k}=\int_{a}%
^{b}\left(  t-a\right)  ^{k}d\nu\left(  t\right)  $ for $k=0,....,n.$

\textbf{AMS Classification, MSC2010}: 44A60, 30E05, 42A82. 

\end{abstract}


Ognyan Kounchev 

Institute of Mathematics and Informatics, Bulgarian

Academy of Sciences, Acad. G. Bonchev str., block 8, 1113 Sofia,

Bulgaria, email: kounchev@math.bas.bg

Hermann Render 

School of Mathematics and Statistics, University College Dublin,  Belfield,
Dublin 4, Ireland,

hermann.render@ucd.ie

Tsvetomir Tsachev 

Institute of Mathematics and Informatics, Bulgarian

Academy of Sciences, Acad. G. Bonchev str., block 8, 1113 Sofia,

Bulgaria, tsachev@math.bas.bg

\section{Introduction}

Let $L$ be a linear differential operator $L$ of order $n+1$ with constant
coefficients, so $L$ is of the form
\begin{equation}
L=L_{\left(  \lambda_{0},...,\lambda_{n}\right)  }=\prod_{j=0}^{n}\left(
\frac{d}{dx}-\lambda_{j}\right)  \label{neuLa}%
\end{equation}
where $\lambda_{0},...,\lambda_{n}$ are arbitrary complex numbers. Any
solution of the equation $L_{n}f=0$ is called an \emph{exponential polynomial}
or $L$\emph{-polynomial. }For example, in the case of pairwise different
$\lambda_{j},j=0,\ldots,n,$ the set of all solutions is the linear span
generated by the functions $e^{\lambda_{0}x},e^{\lambda_{1}x},\ldots
,e^{\lambda_{n}x}.$ In the case when $\lambda_{j}$ occurs $m_{j}$ times in
$\Lambda_{n}=\left(  \lambda_{0},\ldots,\lambda_{n}\right)  ,$ a basis of the
space of all solutions is given by the linearly independent functions
$x^{s}e^{\lambda_{j}x}$ for $s=0,1,\ldots,m_{j}-1.$ We call $\lambda
_{0},\ldots,\lambda_{n}$ the \emph{exponents} or \emph{frequencies} (see
e.g.\ Chapter 3 in \cite{BeGa95}) and use the notation
\begin{equation}
E_{\Lambda_{n}}:=\left\{  f\in C^{\infty}\left(  \mathbb{R},\mathbb{C}\right)
:L_{n}f=0\right\}  \label{solutspace}%
\end{equation}
Exponential polynomials are used in many areas of mathematics: for classical
applications in complex analysis and number theory we refer to \cite{BeGa95}.
Our interest in exponential polynomials originates from applications in
polyharmonic function theory,  interpolation theory and multidimensional
moment problem, see \cite{KoRe13}, \cite{KoReMathNach},  \cite{KoReInter2019},
\cite{KoReArkiv2010}, from applications in subdivision schemes, spline
analysis and wavelet analysis, see \cite{dynlevinexp}, \cite{DKLR11}
\cite{KounchevRender2021JCAM}, \cite{McCa91}, \cite{micchelli} and
\cite{Unser}, from applications in CAGD, see \cite{AKR07}, \cite{CMP04},
\cite{CMP14} and \cite{CMP17}, and finally from applications in data analysis
for smoothing data, see \cite{KounchevRenderTsachevBIT} and
\cite{RamsaySilverman}.

Let us recall that for every $\Lambda_{n}=\left(  \lambda_{0},\ldots
,\lambda_{n}\right)  \in\mathbb{C}^{n+1}$ there exists a unique solution
$\Phi_{\Lambda_{n}}\in E_{\Lambda_{n}}$ to the Cauchy problem
\[
\Phi_{\Lambda_{n}}\left(  0\right)  =\ldots=\Phi_{\Lambda_{n}}^{\left(
n-1\right)  }\left(  0\right)  =0\text{ and }\Phi_{\Lambda_{n}}^{\left(
n\right)  }\left(  0\right)  =1,
\]
and $\Phi_{\Lambda_{n}}$ is called sometimes the \emph{fundamental function}
in $E_{\Lambda_{n}}$ (see Proposition 13.12 in \cite{okbook}, or Chapter $9$
in \cite{Sc81}, or \ \cite{micchelli}, or \cite[p. 501]{Karl68}). Here
$f^{\left(  k\right)  }$ denotes the $k$-th derivative of a function $f.$ In
the present paper we pay a special attention to the following basis of
functions
\[
b_{k}\left(  x\right)  =k!\Phi_{\Lambda_{n}}^{\left(  n-k\right)  }\left(
x\right)  \text{ for }k=0,...,n
\]
for $E_{\Lambda_{n}}$: obviously they are linearly independent since the basis
function $b_{j}$ has a zero of order $j$ at $x=0.$ The unexpected factor $k!$
and the reverse numeration $n-k$ are motivated by the fact that in the
polynomial case, i.e. that $\lambda_{0}=\cdots=\lambda_{n}$ $=0,$ the basis
function $b_{k}\left(  x\right)  $ are equal to the power functions $x^{k}$
for $j=0,...,n$.

The main result of the paper is the following:

\begin{theorem}
\label{ThmMain1}Let $\Lambda_{n}=\left(  \lambda_{0},\ldots,\lambda
_{n}\right)  \in\mathbb{C}^{n+1}$ and $B>0.$ Assume that the fundamental
function $\Phi_{\Lambda_{n}}$ is real-valued and that $\Phi_{\Lambda_{n}%
}^{\left(  n+1\right)  }\left(  x\right)  \geq0$ for all $x\in\left[
0,B\right]  .$ Then, if a non-zero polynomial $R\left(  x\right)  =%
{\displaystyle\sum_{k=0}^{n}}
a_{k}x^{k}$ is non-negative on the interval $\left[  0,B\right]  ,$ the
inequality
\[%
{\displaystyle\sum_{k=0}^{n}}
a_{k}k!\Phi_{\Lambda_{n}}^{\left(  n-k\right)  }\left(  x\right)  >R\left(
x\right)
\]
holds for $x\in\left[  0,B\right]  .$
\end{theorem}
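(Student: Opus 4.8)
The plan is to reduce everything to the single exact identity
\[
\sum_{k=0}^{n}a_{k}k!\,\Phi_{\Lambda_{n}}^{(n-k)}(x)-R(x)=\int_{0}^{x}R(u)\,\Phi_{\Lambda_{n}}^{(n+1)}(x-u)\,du,\qquad x\in[0,B],
\]
after which the inequality is immediate: for $u\in[0,x]\subseteq[0,B]$ we have $R(u)\geq0$ by hypothesis, and $x-u\in[0,x]\subseteq[0,B]$ gives $\Phi_{\Lambda_{n}}^{(n+1)}(x-u)\geq0$, so the right-hand side is $\geq0$, and in fact $>0$ unless the integrand vanishes identically (discussed below).

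To obtain the identity, write $\Phi=\Phi_{\Lambda_{n}}$ and $h=\Phi^{(n)}$, so that $h(0)=1$ and $h'=\Phi^{(n+1)}\geq0$ on $[0,B]$. First I would record the integral representation of the basis functions: for $1\leq k\leq n$ the functions $\Phi^{(n-k)},\Phi^{(n-k+1)},\dots,\Phi^{(n-1)}$ all vanish at $0$ while the $k$-th derivative of $\Phi^{(n-k)}$ equals $h$, so Taylor's formula with integral remainder gives $\Phi^{(n-k)}(x)=\int_{0}^{x}\frac{(x-t)^{k-1}}{(k-1)!}h(t)\,dt$, whence $k!\,\Phi^{(n-k)}(x)=k\int_{0}^{x}(x-t)^{k-1}h(t)\,dt$, while for $k=0$ one simply has $0!\,\Phi^{(n)}(x)=h(x)$. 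Multiplying by $a_{k}$, summing, and pulling the finite sum inside the integral, the inner sum is exactly $R'$:
\[
\sum_{k=0}^{n}a_{k}k!\,\Phi^{(n-k)}(x)=a_{0}h(x)+\int_{0}^{x}\Bigl(\sum_{k=1}^{n}ka_{k}(x-t)^{k-1}\Bigr)h(t)\,dt=a_{0}h(x)+\int_{0}^{x}R'(x-t)\,h(t)\,dt.
\]
Then I would substitute $u=x-t$ and integrate by parts in $\int_{0}^{x}R'(u)\,h(x-u)\,du$ with $R$ as an antiderivative of $R'$: the boundary contribution is $R(x)h(0)-R(0)h(x)=R(x)-a_{0}h(x)$ (using $h(0)=1$ and $R(0)=a_{0}$) and the remaining term is $\int_{0}^{x}R(u)\,h'(x-u)\,du=\int_{0}^{x}R(u)\,\Phi^{(n+1)}(x-u)\,du$. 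Adding $a_{0}h(x)$ back cancels the term $-a_{0}h(x)$ and produces the claimed identity.

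It remains to get strictness. Since $R$ is a non-zero polynomial it has only finitely many zeros, so $R(u)>0$ for all but finitely many $u\in[0,x]$; if the integral on the right were $0$, then, both factors being non-negative and continuous, we would need $\Phi^{(n+1)}(x-u)=0$ for those $u$, hence $\Phi^{(n+1)}\equiv0$ on $[0,x]$. As $\Phi$ is an exponential polynomial, hence real-analytic on $\mathbb{R}$, this forces $\Phi^{(n+1)}\equiv0$ everywhere; combined with $\Phi\in E_{\Lambda_{n}}$ and $\Phi^{(n)}(0)=1$ this can happen only when $\lambda_{0}=\cdots=\lambda_{n}=0$, the purely polynomial case in which the inequality degenerates to an equality. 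Hence, outside that degenerate case and for $x>0$ (the right-hand side of the identity is plainly $0$ at $x=0$), the right-hand side is strictly positive, which gives the strict inequality. I expect this last step — pinning down exactly when the integral fails to be strictly positive, and the behaviour at the endpoint $x=0$ — to be the only subtle point; the rest is the bookkeeping of the two integrations and the recognition of the telescoping sum as $R'$.
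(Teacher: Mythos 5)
Your proposal is correct, and it is built on exactly the same key identity as the paper, namely
\[
\sum_{k=0}^{n}a_{k}k!\,\Phi_{\Lambda_{n}}^{(n-k)}(x)=R(x)+\int_{0}^{x}R(t)\,\Phi_{\Lambda_{n}}^{(n+1)}(x-t)\,dt ,
\]
which the paper isolates as Theorem \ref{ThmMain0}. You derive it differently: the paper proves $k!\,\Phi_{\Lambda_{n}}^{(n-k)}(x)=x^{k}+\int_{0}^{x}t^{k}\Phi_{\Lambda_{n}}^{(n+1)}(x-t)\,dt$ by induction on $k$ with repeated integration by parts, whereas you use Taylor's formula with integral remainder to get $k!\,\Phi^{(n-k)}(x)=k\int_{0}^{x}(x-t)^{k-1}\Phi^{(n)}(t)\,dt$, recognize the summed integrand as $R'(x-t)$, and finish with a single integration by parts. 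Your route is shorter and avoids the induction; the paper's version has the mild advantage of exhibiting the identity termwise for each basis function $b_{k}$, which it reuses elsewhere. Where your write-up genuinely improves on the paper is in the strictness step: the paper simply asserts that $t\mapsto R(t)\Phi_{\Lambda_{n}}^{(n+1)}(x-t)$ is a non-zero analytic function and concludes the integral is positive, which silently excludes the degenerate case $\Phi_{\Lambda_{n}}^{(n+1)}\equiv 0$ (i.e.\ $\lambda_{0}=\cdots=\lambda_{n}=0$, where the theorem's strict inequality actually fails and one only gets equality) and also fails at the endpoint $x=0$, where both sides of the claimed inequality equal $a_{0}$ so that strictness cannot hold. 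You identify both of these edge cases explicitly; they are genuine (minor) defects in the statement and proof as printed, not in your argument. One small point to tighten: when you conclude from $\Phi^{(n+1)}\equiv 0$ that all frequencies vanish, it is worth saying explicitly that $\Phi^{(n+1)}\equiv 0$ together with the initial conditions forces $\Phi(x)=x^{n}/n!$, and that this function lies in $E_{\Lambda_{n}}$ only when $0$ is a frequency of multiplicity $n+1$.
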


One interesting application is the following result:

\begin{theorem}
\label{ThmMain2}Let $\Lambda_{n}=\left(  \lambda_{0},\ldots,\lambda
_{n}\right)  \in\mathbb{C}^{n+1}$ and $B>0.$ Assume that $\Phi_{n}%
:=\Phi_{\Lambda_{n}}$ is real-valued and $\Phi_{n}^{\left(  n+1\right)
}\left(  x\right)  \geq0$ for all $x\in\left[  0,B\right]  .$ Then for any
natural number $k\leq n/2$ the Hankel matrix of size $\left(  k+1\right)
\times\left(  k+1\right)  $ defined by
\[
H_{n,k}\left(  x\right)  =\left(
\begin{array}
[c]{cccc}%
\Phi_{n}^{\left(  n\right)  }\left(  x\right)  & \Phi_{n}^{\left(  n-1\right)
}\left(  x\right)  & .... & k!\Phi_{n}^{\left(  n-k\right)  }\left(  x\right)
\\
\Phi_{n}^{\left(  n-1\right)  }\left(  x\right)  & ... & k!\Phi_{n}^{\left(
n-k\right)  }\left(  x\right)  & \left(  k+1\right)  !\Phi_{n}^{\left(
n-k-1\right)  }\left(  x\right) \\
\vdots &  &  & \vdots\\
k!\Phi_{n}^{\left(  n-k\right)  }\left(  x\right)  & \cdots & \cdots & \left(
2k\right)  !\Phi_{n}^{\left(  n-2k\right)  }\left(  x\right)
\end{array}
\right)
\]
is positive definite, i.e. $p^{T}H_{n,k}\left(  x\right)  p>0$ holds for any
$x\in\left[  0,B\right]  $ and for any non-zero $p\in\mathbb{R}^{k+1}$.
\end{theorem}

Another interesting application concerns the moment problem:

\begin{theorem}
\label{ThmMain3}Let $\Lambda_{n}=\left(  \lambda_{0},\ldots,\lambda
_{n}\right)  \in\mathbb{C}^{n+1}$ and $B>0.$ Assume that $\Phi_{\Lambda_{n}}$
is real-valued and $\Phi_{\Lambda_{n}}^{\left(  n+1\right)  }\left(  x\right)
\geq0$ for all $x\in\left[  0,B\right]  .$ If $\mu$ is non-negative measure
over the interval $\left[  a,b\right]  $ with $b-a<B$ then there exists a
non-negative measure $\nu$ with support in $\left[  a,b\right]  $ such that
\[
\int_{a}^{b}k!\Phi_{\Lambda_{n}}^{\left(  n-k\right)  }\left(  x-a\right)
d\mu\left(  x\right)  =\int_{a}^{b}t^{k}d\nu\left(  t\right)
\]
for $k=0,....,n.$
\end{theorem}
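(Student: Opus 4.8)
The plan is to reduce the assertion to the classical solution of the truncated Hausdorff moment problem on a compact interval, with Theorem \ref{ThmMain1} supplying the positivity that makes this reduction work. First I would translate the variable so as to reduce to the case $a=0$: pushing $\mu$ forward under $x\mapsto x-a$ produces a non-negative measure (still denoted $\mu$) on $\left[  0,c\right]  $, where $c:=b-a<B$, and
\[
s_{k}=\int_{a}^{b}k!\,\Phi_{\Lambda_{n}}^{\left(  n-k\right)  }\left(
x-a\right)  d\mu\left(  x\right)  =\int_{0}^{c}k!\,\Phi_{\Lambda_{n}}^{\left(
n-k\right)  }\left(  y\right)  d\mu\left(  y\right)  ,\qquad k=0,\dots,n.
\]
Hence it suffices to construct a non-negative measure $\nu$ with support in $\left[  0,c\right]  $ such that $\int_{0}^{c}t^{k}d\nu\left(  t\right)  =s_{k}$ for $k=0,\dots,n$; translating $\nu$ back to $\left[  a,b\right]  $ then yields a non-negative measure whose moments relative to the shifted monomials $\left(  t-a\right)  ^{k}$ equal the $s_{k}$, which is the assertion in the form stated in the abstract.

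Next I would introduce the linear functional $L$ on the space $\mathcal{P}_{n}$ of real polynomials of degree at most $n$, defined on the monomial basis by $L\left(  t^{k}\right)  =s_{k}$, that is, $L\left(  \sum_{k=0}^{n}a_{k}t^{k}\right)  =\sum_{k=0}^{n}a_{k}s_{k}$, and prove that $L$ is non-negative on every polynomial that is non-negative on $\left[  0,c\right]  $. Let $R\left(  t\right)  =\sum_{k=0}^{n}a_{k}t^{k}$ be non-negative on $\left[  0,c\right]  $. If $R\equiv0$ then $L\left(  R\right)  =0$; otherwise Theorem \ref{ThmMain1}, applied with $B$ replaced by $c$ --- which is legitimate since $\Phi_{\Lambda_{n}}$ is real-valued and $\Phi_{\Lambda_{n}}^{\left(  n+1\right)  }\geq0$ on $\left[  0,B\right]  \supseteq\left[  0,c\right]  $ --- gives
\[
\sum_{k=0}^{n}a_{k}k!\,\Phi_{\Lambda_{n}}^{\left(  n-k\right)  }\left(
y\right)  >R\left(  y\right)  \geq0\qquad\text{for all }y\in\left[
0,c\right]  .
\]
Interchanging summation and integration and using $\mu\geq0$, one obtains
\[
L\left(  R\right)  =\sum_{k=0}^{n}a_{k}s_{k}=\int_{0}^{c}\left(  \sum
_{k=0}^{n}a_{k}k!\,\Phi_{\Lambda_{n}}^{\left(  n-k\right)  }\left(  y\right)
\right)  d\mu\left(  y\right)  \geq\int_{0}^{c}R\left(  y\right)  d\mu\left(
y\right)  \geq0.
\]

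It then remains to invoke the Riesz-type criterion for the truncated moment problem on a compact interval: a linear functional on $\mathcal{P}_{n}$ that is non-negative on every polynomial of degree at most $n$ which is non-negative on the compact interval $\left[  0,c\right]  $ is of the form $p\mapsto\int_{0}^{c}p\,d\nu$ for some non-negative Borel measure $\nu$ supported in $\left[  0,c\right]  $. This is classical; it follows, for instance, from the fact that for a compact interval the moment cone $\left\{  \left(  \int_{0}^{c}t^{k}d\nu\right)  _{k=0}^{n}:\nu\geq0\text{ on }\left[  0,c\right]  \right\}  $ is the conic hull of the compact set $\{(1,t,\dots,t^{n}):t\in\left[  0,c\right]  \}$, hence a closed convex cone, whose polar is precisely the cone of polynomials non-negative on $\left[  0,c\right]  $, so that bipolarity identifies the positive functionals with the moment functionals (one may also simply quote classical Hausdorff--Markov moment theory). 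Applying this to $L$ produces the required $\nu$, which completes the proof.

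Thus the argument is short once Theorem \ref{ThmMain1} is available, and essentially all of its substance lies in the middle step, where that theorem is used exactly as a pointwise domination of $R$ by the associated exponential polynomial $\sum_{k=0}^{n}a_{k}k!\,\Phi_{\Lambda_{n}}^{\left(  n-k\right)  }\in E_{\Lambda_{n}}$. The step that needs the most care is the moment-problem step: one must invoke the version of the truncated Hausdorff theorem valid for an arbitrary compact interval, and note that it is precisely the compactness of $\left[  a,b\right]  $ that makes the relevant moment cone closed, so that non-negativity of $L$ already forces representability; the preliminary reduction to $a=0$ should also be performed with some care, since after it the moments appearing in the conclusion are naturally those relative to the shifted monomials $\left(  t-a\right)  ^{k}$.
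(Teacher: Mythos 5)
Your proposal is correct and follows essentially the same route as the paper: translate to $a=0$, define the linear functional $R\mapsto\sum_{k}a_{k}s_{k}$, use Theorem \ref{ThmMain1} to show it is non-negative on polynomials non-negative on the interval, and then invoke the Riesz/Markov solvability criterion for the truncated moment problem on a compact interval (the paper cites Theorem 2.6.3 of Akhiezer for this last step, where you sketch a conic-hull justification). Your handling of the translated interval $\left[0,b-a\right]$ and of the shifted monomials $\left(t-a\right)^{k}$ is in fact slightly more careful than the paper's own write-up.
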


The paper is organized as follows: in Section 2 we will provide the proofs of
Theorem 1 to 3 and an extension of the results for the case $x<0.$ As a
further application we will provide in Section 3 a proof of the inequality
\[
\Phi_{\Lambda_{n}}^{\prime\prime}\left(  x\right)  \Phi_{\Lambda_{n}}\left(
x\right)  \leq\left(  \Phi_{\Lambda_{n}}^{\prime}\left(  x\right)  \right)
^{2}<\frac{n}{n-1}\Phi_{\Lambda_{n}}^{\prime\prime}\left(  x\right)
\Phi_{\Lambda_{n}}\left(  x\right)
\]
for all $x\in\left(  0,B\right)  $ where the upper bound is valid under the
assumption that $\Phi_{\Lambda_{n}}$ is real-valued and $\Phi_{\Lambda_{n}%
}^{\left(  n+1\right)  }\left(  x\right)  \geq0$ for all $x\in\left[
0,B\right]  ,$ and the lower bound under the assumption that $E_{\Lambda_{n}}$
is an extended Chebyshev space on $\left[  0,B\right]  $ (see for details
Section 3). In Section 4 we discuss criteria for the monotonicity of
derivatives of the fundamental function which shows that the assumption
$\Phi_{\Lambda_{n}}^{\left(  n+1\right)  }\left(  x\right)  \geq0$ for all
$x\in\left[  0,B\right]  $ is satisfied in many situations.

\section{The proofs of the main results}

The following simple observation is crucial for our proofs. Note that in the
case $x<0$ we use the convention that $\int_{0}^{x}f\left(  t\right)
dt:=-\int_{x}^{0}f\left(  t\right)  dt$.

\begin{theorem}
\label{ThmMain0}Let $R\left(  x\right)  =%
{\displaystyle\sum_{k=0}^{n}}
a_{k}x^{k}$ be a polynomial of degree $n.$ Then the following identity holds
for real numbers $x$
\[%
{\displaystyle\sum_{k=0}^{n}}
a_{k}k!\Phi_{\Lambda_{n}}^{\left(  n-k\right)  }\left(  x\right)  =R\left(
x\right)  +\int_{0}^{x}R\left(  t\right)  \Phi_{\Lambda_{n}}^{\left(
n+1\right)  }\left(  x-t\right)  dt.
\]

\end{theorem}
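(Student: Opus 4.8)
The plan is to verify the identity directly by comparing both sides: show they satisfy the same linear ODE and the same initial conditions, or equivalently apply the variation-of-parameters formula for $L_{\Lambda_n}$. Let me write $\Phi := \Phi_{\Lambda_n}$ and $F(x) := \sum_{k=0}^n a_k k!\,\Phi^{(n-k)}(x)$. The key structural fact is that $\Phi$ is the fundamental solution of $L = L_{\Lambda_n}$ of order $n+1$, so $\Phi^{(j)}(0)=0$ for $j=0,\dots,n-1$, $\Phi^{(n)}(0)=1$, and $L\Phi = 0$. I would first record that the general solution of the inhomogeneous equation $Lu = g$ with zero initial data at $0$ is given by the convolution $u(x) = \int_0^x \Phi(x-t)\,g(t)\,dt$ (this is exactly why $\Phi$ is called the fundamental function; see the references cited in the excerpt). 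This is the engine of the proof.

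The second ingredient is to identify what equation $F$ itself satisfies. Since each $\Phi^{(n-k)}$ for $k=0,\dots,n$ lies in $E_{\Lambda_n}$ (differentiation preserves the solution space, as $d/dx$ commutes with $L$), we have $L F = 0$. Now consider the function $G(x) := R(x) + \int_0^x R(t)\,\Phi^{(n+1)}(x-t)\,dt$. The idea is to apply $L$ to $G$. For the integral term, write $\Phi^{(n+1)}(x-t) = \frac{d}{dx}\Phi^{(n)}(x-t)$ and use the convolution identity above together with the fact that $L\big(\Phi^{(n)}\big)$, suitably interpreted, is related to the Dirac delta — more concretely, it is cleanest to verify instead that $F$ and $G$ agree by checking that $G$ also solves $LG = 0$ with the correct initial conditions, or to reduce everything to repeated application of the single-factor case $\big(\frac{d}{dx}-\lambda\big)$ and induct on $n$.

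Here is the cleanest route I would actually take. Induct on $n$. For $n=0$: $\Phi_{\lambda_0}(x) = e^{\lambda_0 x}$, $R(x) = a_0$, and the claimed identity reads $a_0 e^{\lambda_0 x} = a_0 + \int_0^x a_0 \lambda_0 e^{\lambda_0(x-t)}\,dt$, which is immediate. For the inductive step, factor $L_{\Lambda_n} = \big(\frac{d}{dx}-\lambda_n\big) L_{\Lambda_{n-1}}$ and use the relation between $\Phi_{\Lambda_n}$ and $\Phi_{\Lambda_{n-1}}$: one has $\Phi_{\Lambda_n}(x) = \int_0^x e^{\lambda_n(x-t)}\Phi_{\Lambda_{n-1}}(t)\,dt$, hence $\Phi_{\Lambda_n}'(x) - \lambda_n \Phi_{\Lambda_n}(x) = \Phi_{\Lambda_{n-1}}(x)$, and more generally a clean formula linking $\Phi_{\Lambda_n}^{(j)}$ to $\Phi_{\Lambda_{n-1}}^{(j-1)}$. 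Differentiating the identity $\sum a_k k!\,\Phi_{\Lambda_n}^{(n-k)}(x)$ and applying $\big(\frac{d}{dx}-\lambda_n\big)$ should collapse the $n$-th order identity to the $(n-1)$-th order one applied to the polynomial $\frac{d}{dx}R - \lambda_n R$ (after accounting for the $k!$ bookkeeping), while the initial-value check at $x=0$ is trivial since all the integral terms and all $\Phi^{(n-k)}(0)$ for $k<n$ vanish, leaving $a_n n! \cdot \Phi^{(0)}_{\Lambda_n}$... — one must be slightly careful, since $\Phi_{\Lambda_n}^{(0)}(0)=0$, so actually evaluating the full identity at $0$ gives $0=0$, and the real content is the derivative calculation.

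The main obstacle will be the bookkeeping in the inductive step: matching the factorials $k!$ and the shifted derivative orders $n-k$ against the recursion $\Phi_{\Lambda_n}^{(j)} \leftrightarrow \Phi_{\Lambda_{n-1}}^{(j-1)}$, and correctly tracking how the polynomial $R$ of degree $n$ maps to a polynomial of degree $n-1$ after applying $\frac{d}{dx}-\lambda_n$. A cleaner alternative that sidesteps induction is to use the operational identity $F(x) = \big(R(d/dx)\,\Phi_{\Lambda_n}\big)$-type formula — namely $F(x) = \sum_k a_k k! \Phi^{(n-k)}(x)$ is precisely $\Phi^{(n)} * $ (something), or to observe that $R(x)+\int_0^x R(t)\Phi^{(n+1)}(x-t)\,dt = \frac{d}{dx}\int_0^x R(t)\Phi^{(n)}(x-t)\,dt$ after an integration by parts that uses $\Phi^{(n)}(0)=1$, and then identify $\int_0^x R(t)\Phi^{(n)}(x-t)\,dt$ as an antiderivative-type expression whose derivatives unwind into the $\Phi^{(n-k)}$ sum via repeated integration by parts using $\Phi^{(j)}(0)=0$ for $j<n$. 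Either way, the crux is a careful (but routine) integration-by-parts or ODE-uniqueness argument; I do not expect any genuine difficulty beyond careful indexing.
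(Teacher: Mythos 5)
Your closing ``alternative'' is, in substance, the paper's own proof: by linearity one reduces to the monomials $t^{k}$, establishes
$\int_{0}^{x}t^{k}\Phi_{\Lambda_{n}}^{\left(  n\right)  }\left(  x-t\right)
dt=k!\,\Phi_{\Lambda_{n}}^{\left(  n-k-1\right)  }\left(  x\right)  $
by repeated integration by parts using $\Phi_{\Lambda_{n}}^{\left(  j\right)
}\left(  0\right)  =0$ for $j<n$, and then one more integration by parts (or a differentiation) using $\Phi_{\Lambda_{n}}^{\left(  n\right)  }\left(  0\right)  =1$ yields the identity. That route is sound and matches the paper. The route you say you would ``actually take'', however --- induction on $n$ via the factorization $L_{\Lambda_{n}}=\left(  \frac{d}{dx}-\lambda_{n}\right)  L_{\Lambda_{n-1}}$ --- does not collapse in the way you assert. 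Applying $\frac{d}{dx}-\lambda_{n}$ to $\sum_{k}a_{k}k!\Phi_{\Lambda_{n}}^{\left(  n-k\right)  }$ gives $\sum_{k}a_{k}k!\Phi_{\Lambda_{n-1}}^{\left(  n-k\right)  }$, which is the $\left(  n-1\right)  $-level sum for the polynomial $R^{\prime}$ (not for $R^{\prime}-\lambda_{n}R$, which in any case still has degree $n$) plus a stray term $a_{0}\Phi_{\Lambda_{n-1}}^{\left(  n\right)  }$ falling outside the basis range $0,\dots,n-1$; on the other side, differentiating the convolution produces $\Phi_{\Lambda_{n-1}}^{\left(  n+1\right)  }$ rather than $\Phi_{\Lambda_{n-1}}^{\left(  n\right)  }$, together with boundary terms involving $\Phi_{\Lambda_{n}}^{\left(  n+1\right)  }\left(  0\right)  =\lambda_{0}+\cdots+\lambda_{n}$, so a further integration by parts is needed before the two sides can be compared. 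The bookkeeping can be made to work (both sides acquire the same extra term, and the common initial value at $x=0$ is $a_{0}$, not $0=0$ as you wrote), but none of this is carried out in your proposal, so the inductive step is asserted rather than proved. I would commit to the integration-by-parts route, which is both correct and what the paper actually does.
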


\begin{proof}
Assume that $x>0.$ Note that $R^{\left(  k\right)  }\left(  0\right)
=k!a_{k}.$ It suffices to prove for $k=0,....,n$ that%
\begin{equation}
k!\Phi_{\Lambda_{n}}^{\left(  n-k\right)  }\left(  x\right)  =x^{k}+\int
_{0}^{x}t^{k}\Phi_{\Lambda_{n}}^{\left(  n+1\right)  }\left(  x-t\right)  dt.
\label{eqnew0}%
\end{equation}
Indeed, multiply the equation with $a_{k}$ and sum up from $k=0,....,n,$ and
the statement follows. We use induction to prove (\ref{eqnew0}) and we note
that $k=0$
\[
\int_{0}^{x}\Phi_{\Lambda_{n}}^{\left(  n+1\right)  }\left(  x-t\right)
dt=-\Phi_{\Lambda_{n}}^{\left(  n\right)  }\left(  x-t\right)  \mid_{t=0}%
^{x}=\Phi_{\Lambda_{n}}^{\left(  n\right)  }\left(  x\right)  -\Phi
_{\Lambda_{n}}^{\left(  n\right)  }\left(  0\right)  =\Phi_{\Lambda_{n}%
}^{\left(  n\right)  }\left(  x\right)  -1.
\]
Now suppose that the statement is true for $k.$ We use partial integration to
see that
\begin{align*}
&  \int_{0}^{x}t^{k+1}\Phi_{\Lambda_{n}}^{\left(  n+1\right)  }\left(
x-t\right)  dt\\
&  =-t^{k+1}\Phi_{\Lambda_{n}}^{\left(  n\right)  }\left(  x-t\right)
\mid_{t=0}^{x}+\left(  k+1\right)  \int_{0}^{x}t^{k}\Phi_{\Lambda_{n}%
}^{\left(  n\right)  }\left(  x-t\right)  dt\\
&  =-x^{k+1}+\left(  k+1\right)  \int_{0}^{x}t^{k}\Phi_{\Lambda_{n}}^{\left(
n\right)  }\left(  x-t\right)  dt.
\end{align*}
In order to complete the induction proof we just need the fact that
\begin{equation}
\int_{0}^{x}t^{k}\Phi_{\Lambda_{n}}^{\left(  n\right)  }\left(  x-t\right)
dt=k!\Phi_{\Lambda_{n}}^{\left(  n-k-1\right)  }\left(  x\right)  .
\label{eqNew}%
\end{equation}
Now let us show that (\ref{eqNew}) holds: For $k>0$ partial integration shows
that
\[
\int_{0}^{x}t^{k}\Phi_{\Lambda_{n}}^{\left(  n\right)  }\left(  x-t\right)
dt=-t^{k}\Phi_{\Lambda_{n}}^{\left(  n-1\right)  }\left(  x-t\right)
\mid_{t=0}^{x}+k\int_{0}^{x}t^{k-1}\Phi_{\Lambda_{n}}^{\left(  n-1\right)
}\left(  x-t\right)  dt,
\]
hence the recursion formula
\[
\int_{0}^{x}t^{k}\Phi_{\Lambda_{n}}^{\left(  n\right)  }\left(  x-t\right)
dt=k\int_{0}^{x}t^{k-1}\Phi_{\Lambda_{n}}^{\left(  n-1\right)  }\left(
x-t\right)  dt
\]
is true. From this one easily obtain
\begin{align*}
\int_{0}^{x}t^{k}\Phi_{\Lambda_{n}}^{\left(  n\right)  }\left(  x-t\right)
dt  &  =k!\int_{0}^{x}\Phi_{\Lambda_{n}}^{\left(  n-k\right)  }\left(
x-t\right)  dt\\
&  =k!\left(  -\Phi_{\Lambda_{n}}^{\left(  n-k-1\right)  }\left(  x-t\right)
\mid_{t=0}^{x}\right)  =k!\Phi_{\Lambda_{n}}^{\left(  n-k-1\right)  }\left(
x\right)  .
\end{align*}
In the case $x<0$ the result is proved in a very similar fashion.
\end{proof}

\bigskip

\textbf{Proof of Theorem \ref{ThmMain1}:} Since $\Phi_{\Lambda_{n}}^{\left(
n+1\right)  }\left(  x\right)  \geq0$ for all $x\in\left[  0,B\right]  $ and
$R\left(  x\right)  \geq0$ for all $x\in\left[  0,B\right]  $ we see that%
\[
R\left(  t\right)  \Phi_{\Lambda_{n}}^{\left(  n+1\right)  }\left(
x-t\right)  \geq0
\]
for any $x\in\left[  0,B\right]  .$ Since $t\longmapsto\Phi_{\Lambda_{n}%
}^{\left(  n+1\right)  }\left(  x-t\right)  R\left(  t\right)  $ is a non-zero
analytic function it has only finitely many zeros on $\left[  0,B\right]  $
and therefore
\[
\int_{0}^{x}R\left(  t\right)  \Phi_{\Lambda_{n}}^{\left(  n+1\right)
}\left(  x-t\right)  dt>0.
\]
Now the statement is obvious from Theorem \ref{ThmMain0}.

\bigskip

\textbf{Proof of Theorem \ref{ThmMain2}:} Let $p=\left(  p_{0},....,p_{k}%
\right)  \in\mathbb{R}^{k+1}$ non-zero. It follows that
\begin{align*}
p^{t}H_{n,k}p  &  =%
{\displaystyle\sum_{r=0}^{k}}
{\displaystyle\sum_{s=0}^{k}}
p_{r}p_{s}\left(  r+s\right)  !\Phi_{\Lambda_{n}}^{\left(  n-\left(
s+r\right)  \right)  }\left(  x\right) \\
&  =%
{\displaystyle\sum_{j=0}^{2k}}
{\displaystyle\sum_{s+r=j}}
p_{r}p_{s}\left(  r+s\right)  !\Phi_{\Lambda_{n}}^{\left(  n-j\right)
}\left(  x\right)  =%
{\displaystyle\sum_{j=0}^{2k}}
a_{j}j!\Phi_{\Lambda_{n}}^{\left(  n-j\right)  }\left(  x\right)
\end{align*}
where we set $a_{j}=%
{\displaystyle\sum_{s+r=j}}
p_{r}p_{s}.$ By Theorem \ref{ThmMain1} we obtain
\[
p^{t}H_{k}p>%
{\displaystyle\sum_{j=0}^{2k}}
a_{j}x^{j}=%
{\displaystyle\sum_{j=0}^{2k}}
{\displaystyle\sum_{s+r=j}}
p_{r}p_{s}x^{j}=\left(  p_{0}+p_{1}x+\cdots+p_{k}x^{k}\right)  ^{2}.
\]

\textbf{Proof of Theorem \ref{ThmMain3}:} Without loss of generality we may
assume by using a translaton $t=x-a$ that the left point of the interval $a$
is equal to $0.$ We use Theorem 2.6.3 (due to M. Riesz) in \cite{Akhi65}. Let
$U_{n}$ be the space of all polynomials of degree $\leq n$. We define a linear
functional $\mathfrak{S}:U_{n}\rightarrow\mathbb{R}$ by
\[
\mathfrak{S}\left(  t^{k}\right)  :=\int_{0}^{b}k!\Phi_{\Lambda_{n}}^{\left(
n-k\right)  }\left(  x\right)  d\mu\left(  x\right)
\]
for $k=0,...,n$ and define $s_{k}=$ $\mathfrak{S}\left(  t^{k}\right)  $ for
$k=0,....,n$. We claim that the sequence $\left\{  s_{k}\right\}  _{k=0}^{n}$
is non-negative with respect to the interval $\left(  0,b\right)  $ (cf.
Definition 2.6.3 in \cite{Akhi65}). Let $R\left(  t\right)  =%
{\displaystyle\sum_{j=0}^{n}}
a_{j}t^{j}$ be a polynomial of degree $\leq n$ which is non-negative on
$\left(  0,b\right)  ,$ i.e. $R\left(  t\right)  \geq0$ for all $t\in\left(
0,b\right)  .$ Then%
\[
\mathfrak{S}\left(  R\right)  =%
{\displaystyle\sum_{k=0}^{n}}
a_{k}\int_{0}^{b}k!\Phi_{\Lambda_{n}}^{\left(  n-k\right)  }\left(  x\right)
d\mu\left(  x\right)  .
\]
By Theorem \ref{ThmMain1} the function $%
{\displaystyle\sum_{k=0}^{n}}
a_{k}k!\Phi_{\Lambda_{n}}^{\left(  n-l\right)  }\left(  x\right)  $ is
non-negative on $\left[  0,b\right]  $ and it follows that $\mathfrak{S}%
\left(  R\right)  \geq0.$ By Theorem 2.6.3 in \cite{Akhi65} there exists a
non-negative measure $\nu$ over $\left[  0,b\right]  $ such that
\[
s_{k}=\int_{0}^{b}t^{k}d\sigma\left(  t\right)  \text{ for }k=0,....,n.
\]
The proof is complete.

The following simple example will show that the statement of Theorem
\ref{ThmMain2} (and therefore of Theorem \ref{ThmMain1}) is not true if we
omit assumption $\Phi_{n}^{\left(  n+1\right)  }\left(  x\right)  \geq0$ for
all $x\in\left[  0,B\right]  .$

\begin{example}
\label{Example0}Let $\Lambda_{1}=\left(  -1,-2\right)  ,$ then $\Phi
_{\Lambda_{1}}\left(  x\right)  =e^{-x}-e^{-2x}.$ A computation shows that
\[
\det H_{1,1}\left(  x\right)  =\det\left(
\begin{array}
[c]{cc}%
\Phi_{\Lambda_{1}}^{\prime\prime}\left(  x\right)  & \Phi_{\Lambda_{1}%
}^{\prime}\left(  x\right) \\
\Phi_{\Lambda_{1}}^{\prime}\left(  x\right)  & 2\Phi_{\Lambda_{1}}\left(
x\right)
\end{array}
\right)  =e^{-2x}\left(  4e^{-2x}-6e^{-x}+1\right)
\]
which is negative for $x\in\left[  0,\ln\left(  \sqrt{5}+3\right)  \right]  $
and it changes its sign at $\ln\left(  \sqrt{5}+3\right)  =1.\,\allowbreak
655\,6$. Moreover, the second derivative $\Phi_{\Lambda_{1}}^{\prime\prime
}\left(  x\right)  =e^{-x}-4e^{-2x}$ is negative for $x\in\left[
0,2\ln2\right]  $ and it changes its sign at $2\ln2=\allowbreak1.\,\allowbreak
386\,3.$
\end{example}

In our main results we have required that $x>0.\ $One can extend the results
to the case $x<0$ by imposing an additional assumption:

\begin{theorem}
\label{ThmMain12plus} Let $\Lambda_{n}=\left(  \lambda_{0},\ldots,\lambda
_{n}\right)  \in\mathbb{C}^{n+1}$ and $A<0<B.$ Assume that $\Phi_{\Lambda_{n}%
}$ is real-valued and
\[
\Phi_{\Lambda_{n}}^{\left(  n+1\right)  }\left(  x\right)  \leq0\text{ for all
}x\in\left[  A,0\right]  \text{ and }\Phi_{\Lambda_{n}}^{\left(  n+1\right)
}\left(  x\right)  \geq0\text{ for all }x\in\left[  0,B\right]  .
\]
Then for any polynomial $R\left(  x\right)  =%
{\displaystyle\sum_{k=0}^{n}}
a_{k}x^{k}$ which is non-negative on the interval $\left[  A,B\right]  $ the
inequality
\[%
{\displaystyle\sum_{k=0}^{n}}
a_{k}k!\Phi_{\Lambda_{n}}^{\left(  n-k\right)  }\left(  x\right)  \geq
R\left(  x\right)
\]
holds for $x\in\left[  A,B\right]  .$ Moreover, for any natural number $k\leq
n/2$ the Hankel matrix $H_{n,k}\left(  x\right)  $ defined in Theorem
\ref{ThmMain2} is positive definite for $x\in\left[  A,B\right]  .$
\end{theorem}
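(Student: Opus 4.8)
The plan is to rerun the proofs of Theorems \ref{ThmMain1} and \ref{ThmMain2}, the only new ingredient being a sign analysis of the convolution integral in Theorem \ref{ThmMain0} for negative argument. Fix a polynomial $R(x)=\sum_{k=0}^{n}a_{k}x^{k}$ which is non-negative on $[A,B]$. Theorem \ref{ThmMain0} gives, for every real $x$,
\[
\sum_{k=0}^{n}a_{k}k!\Phi_{\Lambda_{n}}^{(n-k)}(x)=R(x)+\int_{0}^{x}R(t)\Phi_{\Lambda_{n}}^{(n+1)}(x-t)\,dt ,
\]
so it suffices to show that the integral on the right is $\geq 0$ for all $x\in[A,B]$.

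For $x\in[0,B]$ this is precisely the estimate in the proof of Theorem \ref{ThmMain1}: both $t$ and $x-t$ lie in $[0,B]$, so $R(t)\geq 0$ and $\Phi_{\Lambda_{n}}^{(n+1)}(x-t)\geq 0$, and the integral is $\geq 0$. For $x\in[A,0)$ I would use the convention $\int_{0}^{x}=-\int_{x}^{0}$ and note that as $t$ runs over $[x,0]$ the point $x-t$ also runs over $[x,0]\subseteq[A,0]$. On $[x,0]\subseteq[A,B]$ we have $R(t)\geq 0$, while by the new hypothesis $\Phi_{\Lambda_{n}}^{(n+1)}(x-t)\leq 0$; hence $R(t)\Phi_{\Lambda_{n}}^{(n+1)}(x-t)\leq 0$ on $[x,0]$, so $\int_{x}^{0}R(t)\Phi_{\Lambda_{n}}^{(n+1)}(x-t)\,dt\leq 0$ and therefore $\int_{0}^{x}R(t)\Phi_{\Lambda_{n}}^{(n+1)}(x-t)\,dt\geq 0$. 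Feeding this back into the identity yields $\sum_{k=0}^{n}a_{k}k!\Phi_{\Lambda_{n}}^{(n-k)}(x)\geq R(x)$ for all $x\in[A,B]$, which is the first assertion.

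For the Hankel statement I would repeat the proof of Theorem \ref{ThmMain2}. For non-zero $p=(p_{0},\dots,p_{k})\in\mathbb{R}^{k+1}$, expanding and grouping by the order of differentiation gives $p^{t}H_{n,k}(x)p=\sum_{j=0}^{2k}a_{j}j!\Phi_{\Lambda_{n}}^{(n-j)}(x)$ with $a_{j}=\sum_{r+s=j}p_{r}p_{s}$, and the corresponding polynomial $R(t):=\sum_{j=0}^{2k}a_{j}t^{j}=(p_{0}+p_{1}t+\cdots+p_{k}t^{k})^{2}$ is non-zero and non-negative on all of $\mathbb{R}$, hence on $[A,B]$. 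The first part then gives $p^{t}H_{n,k}(x)p\geq (p_{0}+p_{1}x+\cdots+p_{k}x^{k})^{2}\geq 0$. To sharpen this to $p^{t}H_{n,k}(x)p>0$ for $x\neq 0$: either $x$ is not a zero of $p_{0}+\cdots+p_{k}x^{k}$, and the right-hand side is already positive, or $R(x)=0$, in which case one needs the convolution integral to be strictly positive; this holds because $R$ and $\Phi_{\Lambda_{n}}^{(n+1)}$ are real-analytic and not identically zero, so each has only finitely many zeros, while the interval of integration from $0$ to $x$ is non-degenerate — this is exactly the argument behind the strict inequality in Theorem \ref{ThmMain1}.

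The one delicate point, and the only place where the argument differs from the $x\geq 0$ case, is the sign bookkeeping when $x<0$: one must correctly combine (i) the inclusion $x-t\in[A,0]$ for $t\in[x,0]$, (ii) the reversed sign $\Phi_{\Lambda_{n}}^{(n+1)}(x-t)\leq 0$ there, and (iii) the extra minus sign in $\int_{0}^{x}=-\int_{x}^{0}$, so that the two sign reversals cancel and the integral is still $\geq 0$. Once this is settled, the rest is a transcription of the proofs of Theorems \ref{ThmMain1} and \ref{ThmMain2}.
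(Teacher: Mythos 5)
Your proposal is correct and follows essentially the same route as the paper: the paper's proof likewise observes that for $x\in[A,0]$ the substitution $\int_{0}^{x}=-\int_{x}^{0}$ and the sign reversal $\Phi_{\Lambda_{n}}^{(n+1)}(x-t)\leq 0$ for $x-t\in[x,0]\subseteq[A,0]$ cancel, so the convolution term in Theorem \ref{ThmMain0} is non-negative, and then handles the Hankel matrix by rerunning the proof of Theorem \ref{ThmMain2}. If anything, your discussion of when the strict inequality $p^{t}H_{n,k}(x)p>0$ survives for $x<0$ is more explicit than the paper's one-line remark that the argument ``follows the same lines as before.''
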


\begin{proof}
Since $\Phi_{\Lambda_{n}}^{\left(  n+1\right)  }\left(  x\right)  \leq0$ for
all $x\in\left[  A,0\right]  $ and $R\left(  x\right)  \geq0$ for all
$x\in\left[  A,B\right]  $ we see that
\[
\int_{0}^{x}R\left(  t\right)  \Phi_{\Lambda_{n}}^{\left(  n+1\right)
}\left(  x-t\right)  dt=-\int_{x}^{0}R\left(  t\right)  \Phi_{\Lambda_{n}%
}^{\left(  n+1\right)  }\left(  x-t\right)  dt\geq0
\]
for any $x\in\left[  A,0\right]  .$ Now the statement for $x<0$ is obvious
from Theorem \ref{ThmMain0}. The proof of Theorem \ref{ThmMain2} for $x<0$
follows the same lines as before.
\end{proof}

\section{Inequalities for the fundamental function}

In our results we have assumed that $\Phi_{\Lambda_{n}}^{\left(  n+1\right)
}$ is real-valued. It is natural to ask under which conditions the fundamental
function $\Phi_{\Lambda_{n}}$ is real-valued. As usual we say that the space
$E_{\Lambda_{n}}$ is \emph{closed under complex conjugation}, if for $f\in$
$E_{\Lambda_{n}}$ the complex conjugate function $\overline{f}$ is again in
$E_{\Lambda_{n}}.$ It is easy to see that for complex numbers $\lambda_{_{0}%
},...,\lambda_{_{n}}$ the space $E_{\Lambda_{n}}$ is closed under complex
conjugation if and only if there exists a permutation $\sigma$ of the indices
$\left\{  0,...,n\right\}  $ such that $\overline{\lambda_{_{j}}}%
=\lambda_{_{\sigma\left(  j\right)  }}$ for $j=0,...,n.$ In other words,
$E_{\Lambda_{n}}$ is closed under complex conjugation if and only if the
vector $\Lambda_{_{n}}=\left(  \lambda_{_{0}},...,\lambda_{_{n}}\right)  $ is
equal up to reordering to the conjugated vector $\overline{\Lambda_{_{n}}%
}:=\left\{  \overline{\lambda_{0}},...,\overline{\lambda_{n}}\right\}  .$ The
following result is easy to prove and its proof is omitted.

\begin{proposition}
\label{Prop1}If $\Lambda_{_{n}}\ $is equal up to reordering to $\overline
{\Lambda_{_{n}}}$ then $\Phi_{\Lambda_{_{n}}}\left(  t\right)  $ is a
real-valued for any $t\in\mathbb{R}.$
\end{proposition}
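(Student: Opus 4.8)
The plan is to reduce the statement to the uniqueness of the fundamental function, together with the observation that the hypothesis forces the operator $L$ to have real coefficients. First I would record the elementary fact that the first-order operators $\frac{d}{dx}-\lambda_{j}$ commute, so the product $L=\prod_{j=0}^{n}\left(\frac{d}{dx}-\lambda_{j}\right)$ is unchanged under any reordering of its factors. Hence, if $\Lambda_{n}$ is equal up to reordering to $\overline{\Lambda_{n}}$, then
\[
L=\prod_{j=0}^{n}\left(\frac{d}{dx}-\lambda_{j}\right)=\prod_{j=0}^{n}\left(\frac{d}{dx}-\overline{\lambda_{j}}\right),
\]
and the right-hand side is exactly the operator obtained from $L$ by replacing each of its coefficients by the complex conjugate. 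Therefore all coefficients of $L$ are real.

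Next I would pass to complex conjugates in the Cauchy problem defining $\Phi_{\Lambda_{n}}$. By construction $L\Phi_{\Lambda_{n}}=0$ together with $\Phi_{\Lambda_{n}}(0)=\cdots=\Phi_{\Lambda_{n}}^{(n-1)}(0)=0$ and $\Phi_{\Lambda_{n}}^{(n)}(0)=1$. Applying complex conjugation term by term and using that $L$ has real coefficients, the function $\overline{\Phi_{\Lambda_{n}}}$ again satisfies $L\,\overline{\Phi_{\Lambda_{n}}}=0$; moreover it satisfies the very same initial conditions, since the prescribed values $0,\dots,0,1$ are real. By the uniqueness of the solution of this Cauchy problem (the defining property of the fundamental function recalled in the Introduction) it follows that $\overline{\Phi_{\Lambda_{n}}}=\Phi_{\Lambda_{n}}$, which is precisely the assertion that $\Phi_{\Lambda_{n}}(t)\in\mathbb{R}$ for all $t\in\mathbb{R}$.

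There is no genuinely hard step here; the one point that deserves a word of care is the commutativity remark, which is what converts the combinatorial hypothesis ``equal up to reordering'' into the algebraic statement that $L$ is fixed by coefficient-conjugation. Once that is in place, the rest is the standard conjugation-and-uniqueness argument for a linear ODE with real coefficients and real initial data.
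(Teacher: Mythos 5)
Your proof is correct: the hypothesis that $\Lambda_{n}$ equals $\overline{\Lambda_{n}}$ up to reordering makes the coefficients of $L$ (elementary symmetric functions of the $\lambda_{j}$, up to sign) real, and then $\overline{\Phi_{\Lambda_{n}}}$ solves the same Cauchy problem with the same real initial data, so uniqueness gives $\overline{\Phi_{\Lambda_{n}}}=\Phi_{\Lambda_{n}}$. The paper omits the proof as routine, and your conjugation-and-uniqueness argument is exactly the standard one the authors have in mind, so there is nothing to add.
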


Recall that a $E_{\Lambda_{n}}$ is an extended Chebyshev space over an
interval $I$ if each non-zero $f\in E_{\Lambda_{n}}$ has at most $n$ zeros
(including multiplicities) in $I$. The following fact is well known, see e.g.
\cite{KounchevRender2021JCAM}

\begin{proposition}
\label{Prop2}If $\Lambda_{_{n}}=\left(  \lambda_{_{0}},...,\lambda_{_{n}%
}\right)  \in\mathbb{R}^{n+1}$ then $E_{\Lambda_{n}}$ is an extended Chebyshev
space over $\mathbb{R}$ and $\Phi_{\Lambda_{n}}\left(  t\right)  >0$ for all
$t>0$.
\end{proposition}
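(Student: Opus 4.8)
The plan is to establish the two assertions in order: first that $E_{\Lambda_{n}}$ is an extended Chebyshev space over $\mathbb{R}$, and then, using this, that $\Phi_{\Lambda_{n}}$ is strictly positive on $(0,\infty)$.

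For the extended Chebyshev property I would argue by induction on $n$, proved simultaneously for all real tuples of the given length. The case $n=0$ is immediate, since $E_{(\lambda_{0})}=\mathrm{span}\{e^{\lambda_{0}x}\}$ and a nonzero element has no zeros at all. For the inductive step, let $f\in E_{\Lambda_{n}}$ be nonzero and put $g:=\left(\frac{d}{dx}-\lambda_{n}\right)f$. Since the factors of $L$ commute, $L_{(\lambda_{0},\dots,\lambda_{n-1})}g=0$, i.e.\ $g\in E_{\Lambda_{n-1}}$ with $\Lambda_{n-1}=(\lambda_{0},\dots,\lambda_{n-1})$, a real tuple of length $n$. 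If $g\equiv 0$ then $f=c\,e^{\lambda_{n}x}$ with $c\neq 0$, which has no zeros, so assume $g\not\equiv 0$. Set $h(x):=e^{-\lambda_{n}x}f(x)$, so that $h'(x)=e^{-\lambda_{n}x}g(x)$; since $e^{-\lambda_{n}x}$ never vanishes, $h$ has exactly the zeros of $f$ with the same multiplicities, and $h'$ exactly the zeros of $g$ with the same multiplicities. Fix a bounded interval $[a,b]$ and suppose $f$ has $N$ zeros in $[a,b]$ counted with multiplicity; $N$ is finite because a nonzero $C^{\infty}$ solution of a constant-coefficient ODE is analytic with isolated zeros. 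Then $h$ has $N$ zeros in $[a,b]$ with multiplicity, and the Rolle theorem counted with multiplicities --- a zero of $h$ of multiplicity $m$ forces a zero of $h'$ of multiplicity $m-1$ at the same point, plus at least one zero of $h'$ strictly between each pair of consecutive distinct zeros of $h$ --- shows that $h'$, hence $g$, has at least $N-1$ zeros in $[a,b]$ with multiplicity. By the inductive hypothesis $g$ has at most $n-1$ zeros in $\mathbb{R}$ with multiplicity, so $N-1\le n-1$, i.e.\ $N\le n$. As $[a,b]$ was arbitrary, $f$ has at most $n$ zeros in $\mathbb{R}$ counted with multiplicity.

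The positivity of $\Phi_{\Lambda_{n}}$ then follows quickly. Since $\Lambda_{n}\in\mathbb{R}^{n+1}$, Proposition \ref{Prop1} (or simply uniqueness for the real Cauchy problem defining it) gives that $\Phi_{\Lambda_{n}}$ is real-valued. By its defining conditions $\Phi_{\Lambda_{n}}^{(j)}(0)=0$ for $j=0,\dots,n-1$ and $\Phi_{\Lambda_{n}}^{(n)}(0)=1$, so $\Phi_{\Lambda_{n}}$ has a zero of multiplicity exactly $n$ at the origin. Being a nonzero element of the extended Chebyshev space $E_{\Lambda_{n}}$, it has at most $n$ zeros in $\mathbb{R}$ counted with multiplicity; hence all of them sit at $0$ and $\Phi_{\Lambda_{n}}(t)\neq 0$ for every $t\neq 0$. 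In particular $\Phi_{\Lambda_{n}}$ keeps a constant sign on $(0,\infty)$, and the Taylor expansion $\Phi_{\Lambda_{n}}(t)=\tfrac{t^{n}}{n!}+o(t^{n})$ as $t\to 0^{+}$ shows this sign is positive, so $\Phi_{\Lambda_{n}}(t)>0$ for all $t>0$.

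I expect the only point needing care to be the bookkeeping in the multiplicity-counted Rolle estimate (the passage from $N$ zeros of $h$ to $N-1$ zeros of $h'$), together with the observation that the reduction $f\mapsto g=(\tfrac{d}{dx}-\lambda_{n})f$ keeps us inside the class of real exponential-polynomial spaces so that the induction hypothesis applies; both are routine. An alternative would be to quote the classical fact that the functions $x^{s}e^{\lambda_{j}x}$ with real $\lambda_{j}$ form an extended complete Chebyshev system (see e.g.\ Schumaker, Ch.\ 9, or Karlin--Studden), but the division-by-$e^{\lambda_{n}x}$ induction above is short and self-contained.
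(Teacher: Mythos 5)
Your proof is correct. Note, however, that the paper does not prove Proposition \ref{Prop2} at all: it declares the fact ``well known'' and refers the reader to \cite{KounchevRender2021JCAM} (the standard sources would be Karlin or Schumaker, as you note). So there is no argument in the paper to compare against; what you have supplied is the classical self-contained proof. Your induction is the right one: reduce $f$ to $g=(\tfrac{d}{dx}-\lambda_{n})f\in E_{(\lambda_{0},\dots,\lambda_{n-1})}$, pass to $h=e^{-\lambda_{n}x}f$ so that $h'=e^{-\lambda_{n}x}g$, and apply Rolle counted with multiplicities; the bookkeeping $\sum(m_{i}-1)+(p-1)=N-1$ is exactly what is needed, the finiteness of $N$ on compacta follows from analyticity as you say, and phrasing the induction ``simultaneously for all real tuples of the given length'' correctly handles the fact that the tuple shrinks. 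The positivity argument --- $\Phi_{\Lambda_{n}}$ has a zero of multiplicity exactly $n$ at the origin, hence no zeros elsewhere, hence constant sign on $(0,\infty)$, which the leading Taylor term $t^{n}/n!$ identifies as positive --- is also the standard one and is complete. What your version buys over the paper's citation is a short, elementary, fully self-contained treatment that does not require importing the theory of extended complete Chebyshev systems; what the citation buys is brevity. Either is acceptable here.
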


The following result is probably mathematical folkore and we include a proof
for convenience of the reader.

\begin{theorem}
\label{ThmMicc} Assume that $\Lambda_{_{n}}\ $is equal up to reordering to
$\overline{\Lambda_{_{n}}}$ . If $E_{\Lambda_{n}}$ is an extended Chebyshev
space on $\left[  0,B\right)  $ then $\Phi_{\Lambda_{n}}\left(  x\right)  >0$
for all $x\in\left(  0,B\right)  $ and $\frac{\Phi_{\Lambda_{n}}^{\prime}%
}{\Phi_{\Lambda_{n}}}$ is decreasing on $\left(  0,B\right)  $ and
\[
\Phi_{\Lambda_{n}}^{\prime\prime}\left(  x\right)  \Phi_{\Lambda_{n}}\left(
x\right)  \leq\left(  \Phi_{\Lambda_{n}}^{\prime}\left(  x\right)  \right)
^{2}\text{ for all }x\in\left(  0,B\right)  .
\]
and
\end{theorem}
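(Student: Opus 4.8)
The plan is to reduce all three assertions to a single sign statement for the Wronskian $W:=\Phi\Phi''-(\Phi')^{2}$ of $\Phi:=\Phi_{\Lambda_{n}}$ and $\Phi'$, exploiting that \emph{both} $\Phi$ and $\Phi'$ belong to $E_{\Lambda_{n}}$. First, since $\Lambda_{n}$ equals its conjugate up to reordering, Proposition \ref{Prop1} gives that $\Phi$ is real-valued. Because $\Phi^{(j)}(0)=0$ for $j<n$ and $\Phi^{(n)}(0)=1$, the function $\Phi$ has a zero of order exactly $n$ at $0$; since $\dim E_{\Lambda_{n}}=n+1$ and $E_{\Lambda_{n}}$ is an extended Chebyshev space on $[0,B)$, a non-zero element of $E_{\Lambda_{n}}$ has at most $n$ zeros in $[0,B)$ counting multiplicities, so $\Phi$ has no zero in $(0,B)$. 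Together with $\Phi(x)=x^{n}/n!+o(x^{n})$ for small $x>0$ this yields $\Phi(x)>0$ on $(0,B)$, the first assertion.

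Next I would observe that $L_{\Lambda_{n}}$ has constant coefficients, hence commutes with $d/dx$, so $L_{\Lambda_{n}}\Phi'=(L_{\Lambda_{n}}\Phi)'=0$ and $\Phi'\in E_{\Lambda_{n}}$. The main point is that $W$ has no zero in $(0,B)$. Suppose $W(x_{0})=0$ for some $x_{0}\in(0,B)$. As $\Phi(x_{0})>0$, set $c:=\Phi'(x_{0})/\Phi(x_{0})$; then $W(x_{0})=0$ forces $\Phi''(x_{0})=c\,\Phi'(x_{0})$, so $g:=\Phi'-c\Phi\in E_{\Lambda_{n}}$ satisfies $g(x_{0})=\Phi'(x_{0})-c\,\Phi(x_{0})=0$ and $g'(x_{0})=\Phi''(x_{0})-c\,\Phi'(x_{0})=0$; thus $g$ has a zero of order at least $2$ at $x_{0}$. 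On the other hand $\Phi'$ has a zero of order exactly $n-1$ at $0$ while $\Phi$ has a zero of order $n$ there, so $g$ still has a zero of order exactly $n-1$ at $0$, the term $\Phi'$ dominating $c\Phi$ there. Hence $g$ has at least $(n-1)+2=n+1$ zeros in $[0,B)$ counting multiplicities, contradicting the extended Chebyshev property unless $g\equiv0$; but $g\equiv0$ means $\Phi'=c\Phi$, which with $\Phi(0)=0$ forces $\Phi\equiv0$, contradicting $\Phi^{(n)}(0)=1$. Therefore $W$ is a non-vanishing real-analytic function on $(0,B)$ and so has a constant sign there.

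To determine that sign, note that by the first assertion $\Phi'/\Phi$ is well defined on $(0,B)$ and $(\Phi'/\Phi)'=W/\Phi^{2}$ has constant sign, so $\Phi'/\Phi$ is monotone on $(0,B)$. Since $\Phi(x)\sim x^{n}/n!$ and $\Phi'(x)\sim x^{n-1}/(n-1)!$ as $x\to0^{+}$, we have $\Phi'(x)/\Phi(x)\sim n/x\to+\infty$, so a monotone $\Phi'/\Phi$ must be \emph{decreasing} on $(0,B)$; this is the second assertion. Equivalently $W=\Phi^{2}(\Phi'/\Phi)'\leq0$ on $(0,B)$, i.e. $\Phi''(x)\Phi(x)\leq(\Phi'(x))^{2}$ for all $x\in(0,B)$, the third assertion. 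The delicate part is the zero-counting at the left endpoint $0$ — one must use that the extended Chebyshev hypothesis is imposed on the half-open interval $[0,B)$, so that the order-$n$ zero of $\Phi$ (and the order-$(n-1)$ zero of $g$) at $0$ is genuinely counted — together with the simple but essential remark that $\Phi'$, like $\Phi$, lies in $E_{\Lambda_{n}}$ because the operator has constant coefficients.
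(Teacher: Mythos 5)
Your proof is correct and follows essentially the same route as the paper: both reduce everything to showing that the Wronskian $\Phi\Phi''-(\Phi')^{2}$ cannot vanish at any $x_{0}\in(0,B)$ by producing a nontrivial element of $E_{\Lambda_{n}}$ with a double zero at $x_{0}$ plus $n-1$ zeros at $0$, contradicting the extended Chebyshev bound, and then fix the sign from the local behaviour at $0$. Your normalized auxiliary function $g=\Phi'-c\Phi$ handles the degenerate case a bit more cleanly than the paper's $f(x)=\Phi''(x_{0})\Phi(x)-\Phi'(x_{0})\Phi'(x)$, and you determine the sign via $\Phi'/\Phi\to+\infty$ rather than via the leading Taylor coefficient of the Wronskian, but these are cosmetic variations on the same argument.
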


\begin{proof}
Note that $\Phi_{\Lambda_{n}}$ has exactly $n$ zeros at $x=0.$ Since
$E_{\Lambda_{n}}$ is an extended Chebyshev space on $\left[  0,B\right)  $ it
can not have any zero in $\left(  0,B\right)  ,$ hence $\Phi_{\Lambda_{n}%
}\left(  x\right)  >0$ for all $x\in\left(  0,B\right)  .$ Clearly $\frac
{\Phi_{\Lambda_{n}}^{\prime}}{\Phi_{\Lambda_{n}}}$ is decreasing on $\left(
0,B\right)  $ if
\[
w=\Phi_{\Lambda_{N}}^{\prime\prime}\Phi_{\Lambda_{N}}-\Phi_{\Lambda_{N}%
}^{\prime}\Phi_{\Lambda_{N}}^{\prime}.
\]
is negative on $\left(  0,B\right)  .$ Suppose that $w\left(  x_{0}\right)
=0$ for some $x_{0}\in\left(  0,B\right)  .$ Define%
\[
f\left(  x\right)  :=\Phi_{\Lambda_{N}}^{\prime\prime}\left(  x_{0}\right)
\Phi_{\Lambda_{N}}\left(  x\right)  -\Phi_{\Lambda_{N}}^{\prime}\left(
x_{0}\right)  \Phi_{\Lambda_{N}}^{\prime}\left(  x\right)  .
\]
Then $f\in E_{\Lambda_{N}}$ and $f\left(  x_{0}\right)  =w\left(
x_{0}\right)  =0.$ Further%
\[
f^{\prime}\left(  x\right)  =\Phi_{\Lambda_{N}}^{\prime}\left(  x_{0}\right)
\Phi_{\Lambda_{N}}^{\prime\prime}\left(  x\right)  -\Phi_{\Lambda_{N}}%
^{\prime\prime}\left(  x_{0}\right)  \Phi_{\Lambda_{N}}^{\prime}\left(
x\right)  .
\]
Thus $f^{\prime}\left(  x_{0}\right)  =0.$ Clearly $f$ has $N-1$ zeros at
$x=0,$ so $f$ has $N+1$ zeros in $\left[  0,B\right)  .$ Since $E_{\Lambda
_{n}}$ is an extended Chebyshev space on $\left[  0,B\right)  $ this implies
that $f$ is identical zero. Thus
\[
\Phi_{\Lambda_{N}}^{\prime\prime}\left(  x_{0}\right)  \Phi_{\Lambda_{N}%
}\left(  x\right)  =\Phi_{\Lambda_{N}}^{\prime}\left(  x_{0}\right)
\Phi_{\Lambda_{N}}^{\prime}\left(  x\right)
\]
for all $x\in\left[  0,B\right)  .$ Since $\Phi_{\Lambda_{N}}\left(  x\right)
$ and $\Phi_{\Lambda_{N}}^{\prime}\left(  x\right)  $ are linearly independent
this implies that $\Phi_{\Lambda_{N}}^{\prime}\left(  x_{0}\right)  =0$ and
$\Phi_{\Lambda_{N}}^{\prime\prime}\left(  x_{0}\right)  =0.$ It follows that
$\Phi_{\Lambda_{N}}^{\prime}$ has a double zero at $x_{0}$ and $N-1$ zeros at
$0,$ so it has $N+1$ zeros, so $\Phi_{\Lambda_{N}}^{\prime}$ is identical
zero, a contradiction. Thus $w$ has only zeros at $x=0,$so it does not change
the sign. Looking at the Taylor series of $w$ it is easy to see that $w\left(
x\right)  $ is negative for $x>0$ sufficiently small. This finishes the proof.
\end{proof}

Our results lead to an upper estimate of the function $x\longmapsto\left(
\Phi_{\Lambda_{n}}^{\prime}\left(  x\right)  \right)  ^{2}$:

\begin{theorem}
\label{ThmMain4}Assume that $\Lambda_{_{n}}\ $is equal up to reordering to
$\overline{\Lambda_{_{n}}}$ and $B>0.$ Assume that $\Phi_{\Lambda_{n}%
}^{\left(  n+1\right)  }\left(  x\right)  \geq0$ for all $x\in\left[
0,B\right]  .$ Then for any $x>0.$
\begin{equation}
\left(  \Phi_{\Lambda_{n}}^{\prime}\left(  x\right)  \right)  ^{2}<\frac
{n}{n-1}\Phi_{\Lambda_{n}}^{\prime\prime}\left(  x\right)  \Phi_{\Lambda_{n}%
}\left(  x\right)  . \label{turanineq1}%
\end{equation}

\end{theorem}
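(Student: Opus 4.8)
The plan is to feed Theorem~\ref{ThmMain1} a polynomial tailored so that the associated exponential-polynomial combination $\sum_{k}a_{k}k!\Phi_{\Lambda_{n}}^{\left(n-k\right)}$ involves only the three functions $\Phi_{\Lambda_{n}}$, $\Phi_{\Lambda_{n}}^{\prime}$, $\Phi_{\Lambda_{n}}^{\prime\prime}$. First I note that, since $\Lambda_{n}$ is equal up to reordering to $\overline{\Lambda_{n}}$, Proposition~\ref{Prop1} guarantees that $\Phi_{\Lambda_{n}}$ is real-valued, so the hypotheses of Theorem~\ref{ThmMain1} are in force on $\left[0,B\right]$.

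Fix $x\in\left(0,B\right)$ and real numbers $\alpha,\beta$ not both zero, and set
\[
R\left(t\right)=t^{n-2}\left(\alpha+\beta t\right)^{2}=\alpha^{2}t^{n-2}+2\alpha\beta t^{n-1}+\beta^{2}t^{n}.
\]
This is a non-zero polynomial of degree $\leq n$, and it is non-negative on $\left[0,B\right]$, being the product of $t^{n-2}\geq0$ with a square. Applying Theorem~\ref{ThmMain1} to $R$, whose only non-zero coefficients are $a_{n-2}=\alpha^{2}$, $a_{n-1}=2\alpha\beta$, $a_{n}=\beta^{2}$, and recalling that $k!\Phi_{\Lambda_{n}}^{\left(n-k\right)}\left(x\right)$ is the $k$-th basis function, I obtain
\[
\alpha^{2}\left(n-2\right)!\,\Phi_{\Lambda_{n}}^{\prime\prime}\left(x\right)+2\alpha\beta\left(n-1\right)!\,\Phi_{\Lambda_{n}}^{\prime}\left(x\right)+\beta^{2}n!\,\Phi_{\Lambda_{n}}\left(x\right)>x^{n-2}\left(\alpha+\beta x\right)^{2}\geq0.
\]
Hence the quadratic form in $\left(\alpha,\beta\right)$ on the left is strictly positive for all $\left(\alpha,\beta\right)\neq\left(0,0\right)$; equivalently, the symmetric matrix
\[
M\left(x\right)=\left(
\begin{array}{cc}
\left(n-2\right)!\,\Phi_{\Lambda_{n}}^{\prime\prime}\left(x\right) & \left(n-1\right)!\,\Phi_{\Lambda_{n}}^{\prime}\left(x\right)\\
\left(n-1\right)!\,\Phi_{\Lambda_{n}}^{\prime}\left(x\right) & n!\,\Phi_{\Lambda_{n}}\left(x\right)
\end{array}
\right)
\]
is positive definite.

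From this I would extract two consequences. First, the diagonal entries of $M\left(x\right)$ are positive, so $\Phi_{\Lambda_{n}}^{\prime\prime}\left(x\right)>0$ and $\Phi_{\Lambda_{n}}\left(x\right)>0$; in particular the right-hand side of (\ref{turanineq1}) is a genuinely positive bound. Second, $\det M\left(x\right)>0$, that is,
\[
\left(n-2\right)!\,n!\,\Phi_{\Lambda_{n}}^{\prime\prime}\left(x\right)\Phi_{\Lambda_{n}}\left(x\right)>\left(\left(n-1\right)!\right)^{2}\left(\Phi_{\Lambda_{n}}^{\prime}\left(x\right)\right)^{2}.
\]
Dividing by $\left(\left(n-1\right)!\right)^{2}$ and using the identity $\left(n-2\right)!\,n!/\left(\left(n-1\right)!\right)^{2}=n/\left(n-1\right)$ yields exactly (\ref{turanineq1}). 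Throughout, $n\geq2$ is tacitly assumed, since otherwise neither $R$ nor the constant $n/\left(n-1\right)$ makes sense.

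There is no genuinely hard step here: the single idea is the choice $R\left(t\right)=t^{n-2}\left(\alpha+\beta t\right)^{2}$, which isolates the three lowest basis functions $b_{n-2},b_{n-1},b_{n}$ while remaining non-negative on $\left[0,B\right]$, and everything after that is the routine passage from a positive quadratic form to a positive $2\times2$ determinant. The only point that needs a little care is the range of $x$: the strict inequality of Theorem~\ref{ThmMain1} should be read for $x\in\left(0,B\right)$, since at $x=0$ Theorem~\ref{ThmMain0} gives only equality. I would also remark in passing that when $n$ is even, $M\left(x\right)$ is precisely the bottom-right $2\times2$ principal submatrix of the Hankel matrix $H_{n,n/2}\left(x\right)$ from Theorem~\ref{ThmMain2}, so in that case the inequality is immediate; the polynomial-based argument has the advantage of also covering odd $n$.
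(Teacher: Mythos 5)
Your argument is correct, and it lands in exactly the same place as the paper's proof: positive definiteness of the $2\times 2$ matrix with entries $\left(n-2\right)!\,\Phi_{\Lambda_{n}}^{\prime\prime}\left(x\right)$, $\left(n-1\right)!\,\Phi_{\Lambda_{n}}^{\prime}\left(x\right)$, $n!\,\Phi_{\Lambda_{n}}\left(x\right)$, followed by the determinant computation $\left(n-2\right)!\,n!/\left(\left(n-1\right)!\right)^{2}=n/\left(n-1\right)$. The difference is how that positive definiteness is obtained. The paper simply cites Theorem \ref{ThmMain2}, viewing this matrix as a principal submatrix of the Hankel matrix $H_{n,k}\left(x\right)$; but that identification requires $2k=n$, so as written the paper's proof only literally covers even $n$ --- a point you correctly flag. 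You instead return to Theorem \ref{ThmMain1} with the hand-picked non-negative polynomial $R\left(t\right)=t^{n-2}\left(\alpha+\beta t\right)^{2}$, which isolates precisely the basis functions $b_{n-2},b_{n-1},b_{n}$ and works uniformly for all $n\geq 2$. This is the same mechanism the paper uses to prove Theorem \ref{ThmMain2} itself (there the polynomial is $\left(p_{0}+\cdots+p_{k}x^{k}\right)^{2}$), so your proof is in effect that argument rerun with the weight $t^{n-2}$ in front of the square; what it buys is the odd-$n$ case, plus the positivity of $\Phi_{\Lambda_{n}}\left(x\right)$ and $\Phi_{\Lambda_{n}}^{\prime\prime}\left(x\right)$ as byproducts. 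Your side remarks are also accurate: the strict inequality of Theorem \ref{ThmMain1} should indeed be read on $\left(0,B\right]$ rather than at $x=0$ (where Theorem \ref{ThmMain0} gives equality), and the hypothesis on $\Lambda_{n}$ enters only through Proposition \ref{Prop1} to make $\Phi_{\Lambda_{n}}$ real-valued.
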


\begin{proof}
By Theorem \ref{ThmMain2} we know that the submatrix
\[
A_{n}:=\left(
\begin{array}
[c]{cc}%
\left(  n-2\right)  !\Phi_{\Lambda_{n}}^{\prime\prime}\left(  x\right)  &
\left(  n-1\right)  !\Phi_{\Lambda_{n}}^{\prime}\left(  x\right) \\
\left(  n-1\right)  !\Phi_{\Lambda_{n}}^{\prime}\left(  x\right)  &
n!\Phi_{\Lambda_{n}}\left(  x\right)
\end{array}
\right)
\]
is positive definite, so it has a positive determinant and
\[
\det A_{n}=\left(  n-1\right)  !\left(  n-1\right)  !\left(  \frac{n}{n-1}%
\Phi_{\Lambda_{n}}^{\prime\prime}\left(  x\right)  \Phi_{\Lambda_{n}}\left(
x\right)  -\left(  \Phi_{\Lambda_{n}}^{\prime}\left(  x\right)  \right)
^{2}\right)  >0.
\]

\end{proof}

In the next example we show that inequality (\ref{turanineq1}) does not hold
in general for $x<0.$

\begin{example}
\label{Example1}We take $\Lambda_{3}=\left(  -1,1,0,1\right)  $. Then it is
easy to verify that
\[
\Phi_{\Lambda_{3}}\left(  x\right)  =\frac{1}{2}\left(  e^{x}\left(
x-2\right)  +\sinh x+2\right)  .
\]
Then $\frac{d^{4}}{dx^{4}}\Phi_{\Lambda_{3}}\left(  x\right)  =e^{x}+\frac
{1}{2}\sinh x+\frac{1}{2}xe^{x}>0$ for $x>0.$ Theorem \ref{ThmMicc} provides
the lower bound $1$ in
\[
1\leq F\left(  x\right)  =\frac{\Phi_{\Lambda_{3}}^{\prime}\left(  x\right)
\Phi_{\Lambda_{3}}^{\prime}\left(  x\right)  }{\Phi_{\Lambda_{3}}%
^{\prime\prime}\left(  x\right)  \Phi_{\Lambda_{3}}\left(  x\right)  }%
=\frac{2\left(  \frac{1}{2}\cosh x-\frac{1}{2}e^{x}+\frac{1}{2}xe^{x}\right)
^{2}}{\left(  \frac{1}{2}\sinh x+\frac{1}{2}xe^{x}\right)  \left(
e^{x}\left(  x-2\right)  +\sinh x+2\right)  }%
\]
for all $x>0.$ Theorem \ref{ThmMain4} for $n=3$ shows that $F\left(  x\right)
$ is bounded by $3/2$ whenever $x\geq0.$ The graph of $F$ shows that $F\left(
x\right)  $ takes its maximum for some $x_{0}<0,$ and the maximum value is
larger than $3/2.$ Alternatively, one can check that $F\left(  -0.6\right)
=1.\,\allowbreak5387>3/2.$ In this example, the submatrix $A_{n}$ is not
positive definite for $x<0,$ so the matrix defined in Theorem \ref{ThmMain2}
is not positive definite for $x<0.$
\end{example}

\section{Monotonicity of the derivatives of the fundamental function}

We have seen that our assumption, $\Phi_{\Lambda_{n}}^{\left(  n+1\right)
}\left(  x\right)  \geq0$ for all $x>0$, is essential in our theorems. Hence
it is natural to discuss monotonicity properties of the fundamental function
in dependence of the vector $\Lambda_{n}.$

\begin{proposition}
\label{Prop3}Let $\Lambda_{_{n}}=\left(  \lambda_{_{0}},...,\lambda_{_{n}%
}\right)  \in\mathbb{R}^{n+1}$ and $n\geq2.$ Then $\Phi_{\Lambda_{n}}^{\prime
}\left(  x\right)  >0$ for all $x>0$ if and only if there exists $j\in\left\{
0,...,n\right\}  $ such that $\lambda_{j}$ is non-negative.
\end{proposition}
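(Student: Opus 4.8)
The plan is to reduce the statement to Proposition \ref{Prop2} by means of the differentiation identity
\[
\Phi_{\Lambda_{n}}^{\prime}=\Phi_{\widehat{\Lambda}_{j}}+\lambda_{j}\Phi_{\Lambda_{n}},
\]
valid for every index $j$, where $\widehat{\Lambda}_{j}\in\mathbb{R}^{n}$ denotes the tuple obtained from $\Lambda_{n}$ by deleting the entry $\lambda_{j}$ and $L_{\widehat{\Lambda}_{j}}$ is the corresponding operator of order $n$. First I would prove this identity. Put $g:=\left(\frac{d}{dx}-\lambda_{j}\right)\Phi_{\Lambda_{n}}$; since $\Phi_{\Lambda_{n}}$ does not depend on the ordering of the exponents, I may assume $j=n$, and then
\[
L_{\widehat{\Lambda}_{n}}g=\prod_{i=0}^{n-1}\left(\frac{d}{dx}-\lambda_{i}\right)\left(\frac{d}{dx}-\lambda_{n}\right)\Phi_{\Lambda_{n}}=L_{\Lambda_{n}}\Phi_{\Lambda_{n}}=0,
\]
so $g\in E_{\widehat{\Lambda}_{n}}$. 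From the Cauchy data $\Phi_{\Lambda_{n}}^{(k)}(0)=0$ for $k=0,\dots,n-1$ and $\Phi_{\Lambda_{n}}^{(n)}(0)=1$ one computes $g^{(k)}(0)=\Phi_{\Lambda_{n}}^{(k+1)}(0)-\lambda_{n}\Phi_{\Lambda_{n}}^{(k)}(0)=0$ for $k=0,\dots,n-2$ and $g^{(n-1)}(0)=1$; these are precisely the conditions defining the fundamental function $\Phi_{\widehat{\Lambda}_{n}}$ of the order-$n$ operator $L_{\widehat{\Lambda}_{n}}$, so uniqueness forces $g=\Phi_{\widehat{\Lambda}_{n}}$, which is the displayed identity.

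For the implication from ``there is an index $j$ with $\lambda_{j}\geq0$'' to ``$\Phi_{\Lambda_{n}}^{\prime}>0$ on $(0,\infty)$'' I pick such a $j$. Then $\widehat{\Lambda}_{j}\in\mathbb{R}^{n}$, so Proposition \ref{Prop2} applies both to $\widehat{\Lambda}_{j}$ and to $\Lambda_{n}$, giving $\Phi_{\widehat{\Lambda}_{j}}(x)>0$ and $\Phi_{\Lambda_{n}}(x)>0$ for all $x>0$. Since $\lambda_{j}\geq0$, the identity yields
\[
\Phi_{\Lambda_{n}}^{\prime}(x)=\Phi_{\widehat{\Lambda}_{j}}(x)+\lambda_{j}\Phi_{\Lambda_{n}}(x)>0\qquad(x>0).
\]

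For the converse I argue by contraposition: assume $\lambda_{j}<0$ for every $j$. Each basis function $x^{s}e^{\lambda_{j}x}$ of $E_{\Lambda_{n}}$ tends to $0$ as $x\to+\infty$, hence the real function $\Phi_{\Lambda_{n}}$, being a linear combination of such functions, satisfies $\Phi_{\Lambda_{n}}(x)\to0$ as $x\to+\infty$. Combined with $\Phi_{\Lambda_{n}}(0)=0$ and $\Phi_{\Lambda_{n}}(x)>0$ for $x>0$ (Proposition \ref{Prop2}), continuity forces $\Phi_{\Lambda_{n}}$ to attain its positive maximum over $[0,\infty)$ at some interior point $x_{0}>0$, where necessarily $\Phi_{\Lambda_{n}}^{\prime}(x_{0})=0$. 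Thus $\Phi_{\Lambda_{n}}^{\prime}(x)>0$ fails for at least one $x>0$, which is what had to be shown. The only genuinely delicate step is the differentiation identity in the first paragraph: it requires checking the Cauchy data of $g$ and invoking the order-independence of $\Phi_{\Lambda_{n}}$ so that the deleted exponent may be taken to be an arbitrary $\lambda_{j}$; everything afterwards is a direct application of Proposition \ref{Prop2} together with an elementary limiting argument at $+\infty$.
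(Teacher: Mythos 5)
Your proof is correct and follows essentially the same route as the paper: the identity $\left(\frac{d}{dx}-\lambda_{j}\right)\Phi_{\Lambda_{n}}=\Phi_{\widehat{\Lambda}_{j}}$ combined with Proposition \ref{Prop2} for sufficiency, and the decay-at-infinity/interior-maximum argument for necessity. The only difference is that you supply the verification of the differentiation identity via the Cauchy data, which the paper treats as well known.
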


\begin{proof}
For sufficiency, assume without loss of generality that $\lambda_{n}\geq0.$ It
is easy to see (and well known) that
\[
\left(  \frac{d}{dx}-\lambda_{n}\right)  \Phi_{\left(  \lambda_{0}%
,...,\lambda_{n}\right)  }=\Phi_{\left(  \lambda_{0},...,\lambda_{n-1}\right)
}.
\]
By Proposition 1, $\Phi_{\left(  \lambda_{0},...,\lambda_{n}\right)  }\left(
x\right)  >0$ and $\Phi_{\left(  \lambda_{0},...,\lambda_{n-1}\right)
}\left(  x\right)  >0$ for all $x>0$ and it follows that
\[
\Phi_{\left(  \lambda_{0},...,\lambda_{n}\right)  }^{\prime}\left(  x\right)
=\Phi_{\left(  \lambda_{0},...,\lambda_{n-1}\right)  }\left(  x\right)
+\lambda_{n}\Phi_{\left(  \lambda_{0},...,\lambda_{n}\right)  }\left(
x\right)  \geq0.
\]
For necessity, assume that all $\lambda_{j}$ are negative and we are aiming
for a contradiction. Let $\lambda_{1},...,\lambda_{r}$ be the pairwise
different frequencies and $d_{1},...,d_{r}$ the multiplicities. The
fundamental function is of the form
\[
\Phi_{\Lambda_{n}}\left(  x\right)  =%
{\displaystyle\sum_{j=0}^{r}}
{\displaystyle\sum_{l_{j}=0}^{d_{j}-1}}
c_{j,l_{j}}x^{l_{j}}e^{\lambda_{j}x}.
\]
Since $\lambda_{j}<0$ for all $j=0,...,r,$ it follows that $\Phi_{\Lambda_{n}%
}\left(  x\right)  \rightarrow0$ for $x\rightarrow\infty.$ Further
$\Phi_{\Lambda_{n}}\left(  0\right)  =0$ since $n\geq2.$ Thus $\Phi
_{\Lambda_{n}}$ has an absolute maximum on $\left(  0,\infty\right)  $ and
therefore it has a local maximum, hence there exists $x_{0}$ with
$\Phi_{\Lambda_{n}}^{\prime}\left(  x_{0}\right)  =0,$ a contradiction.
\end{proof}

\begin{proposition}
\label{Prop4}Let $\Lambda_{_{n}}=\left(  \lambda_{_{0}},...,\lambda_{_{n}%
}\right)  \in\mathbb{R}^{n+1}.$ If there exist $j,k\in\left\{
0,....,n\right\}  $ with $k\neq j$ and $\lambda_{j}+\lambda_{k}\geq0$ then%
\[
\Phi_{\Lambda_{n}}^{\prime}\left(  x\right)  >0\text{ and }\Phi_{\Lambda_{n}%
}^{\prime\prime}\left(  x\right)  >0\text{ for all }x>0.
\]

\end{proposition}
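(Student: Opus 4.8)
The plan is to bootstrap from Proposition 3 by exploiting the same "lowering" identity $\left(\frac{d}{dx}-\lambda_j\right)\Phi_{(\lambda_0,\ldots,\lambda_j)}=\Phi_{(\lambda_0,\ldots,\lambda_{j-1})}$ used there, but applied twice, together with a well-chosen change of function. First I would reorder $\Lambda_n$ (which does not change $\Phi_{\Lambda_n}$) so that the two distinguished exponents are $\lambda_{n-1}$ and $\lambda_n$ with $\lambda_{n-1}+\lambda_n\geq 0$. Write $c:=\tfrac{1}{2}(\lambda_{n-1}+\lambda_n)\geq 0$. The key observation is that $\Phi_{\Lambda_n}$ satisfies a second-order factor $\left(\frac{d}{dx}-\lambda_{n-1}\right)\left(\frac{d}{dx}-\lambda_n\right)$, i.e.
\[
\Phi_{\Lambda_n}''-(\lambda_{n-1}+\lambda_n)\Phi_{\Lambda_n}'+\lambda_{n-1}\lambda_n\Phi_{\Lambda_n}=\Phi_{\Lambda_{n-2}},
\]
where $\Lambda_{n-2}=(\lambda_0,\ldots,\lambda_{n-2})$, and by Proposition 2 (applied to this shorter real vector) we have $\Phi_{\Lambda_{n-2}}(x)>0$ for $x>0$, together with $\Phi_{\Lambda_{n-1}}(x)>0$, $\Phi_{\Lambda_n}(x)>0$ for $x>0$.

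Next I would handle $\Phi_{\Lambda_n}'>0$: since $\lambda_{n-1}+\lambda_n\geq 0$, at least one of $\lambda_{n-1},\lambda_n$ is $\geq 0$, so Proposition 3 already gives $\Phi_{\Lambda_n}'(x)>0$ for all $x>0$ (note $n\geq 2$ is forced since we need two distinct indices). For the second derivative I would use the single-step identity $\Phi_{\Lambda_n}'=\Phi_{\Lambda_{n-1}}+\lambda_n\Phi_{\Lambda_n}$. Differentiating once more,
\[
\Phi_{\Lambda_n}''=\Phi_{\Lambda_{n-1}}'+\lambda_n\Phi_{\Lambda_n}'
=\bigl(\Phi_{\Lambda_{n-2}}+\lambda_{n-1}\Phi_{\Lambda_{n-1}}\bigr)+\lambda_n\Phi_{\Lambda_n}'.
\]
Now substitute $\Phi_{\Lambda_{n-1}}=\Phi_{\Lambda_n}'-\lambda_n\Phi_{\Lambda_n}$ to get
\[
\Phi_{\Lambda_n}''=\Phi_{\Lambda_{n-2}}+(\lambda_{n-1}+\lambda_n)\Phi_{\Lambda_n}'-\lambda_{n-1}\lambda_n\Phi_{\Lambda_n}.
\]
All three terms on the right need to be sorted out in sign: $\Phi_{\Lambda_{n-2}}>0$, and $(\lambda_{n-1}+\lambda_n)\Phi_{\Lambda_n}'\geq 0$ because of the hypothesis and $\Phi_{\Lambda_n}'>0$; the only possibly troublesome term is $-\lambda_{n-1}\lambda_n\Phi_{\Lambda_n}$, which is negative exactly when $\lambda_{n-1}\lambda_n>0$, i.e. when both exponents are positive.

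The main obstacle, then, is that last term. I would split into cases. If $\lambda_{n-1}\lambda_n\leq 0$ the three-term formula immediately gives $\Phi_{\Lambda_n}''>0$ and we are done. If $\lambda_{n-1}\lambda_n>0$, then both are $>0$; I would instead go back a step and rewrite using the shorter chain so as to exploit positivity of $\Phi_{\Lambda_{n-1}}'$ directly. Concretely, from $\Phi_{\Lambda_n}''=\Phi_{\Lambda_{n-1}}'+\lambda_n\Phi_{\Lambda_n}'$ it suffices to show $\Phi_{\Lambda_{n-1}}'(x)>0$ for $x>0$ (since $\lambda_n>0$ and $\Phi_{\Lambda_n}'>0$ already hold); and $\Phi_{\Lambda_{n-1}}'(x)>0$ follows from Proposition 3 applied to the vector $\Lambda_{n-1}=(\lambda_0,\ldots,\lambda_{n-1})$, because $\lambda_{n-1}>0\geq$ some exponent is not needed — we only need \emph{one} nonnegative exponent among $\lambda_0,\ldots,\lambda_{n-1}$, and $\lambda_{n-1}>0$ supplies it. (One must check $n-1\geq 2$, i.e. $n\geq 3$; the case $n=2$ with both $\lambda_1,\lambda_2>0$ can be checked directly, either by the three-term formula — noting $\Phi_{\Lambda_0}=e^{\lambda_0 x}$ and bounding — or by an explicit computation.) Assembling the cases gives $\Phi_{\Lambda_n}''(x)>0$ for all $x>0$, completing the proof.
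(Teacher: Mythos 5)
Your argument is correct, and its skeleton (the lowering identity $D_{\lambda_n}D_{\lambda_{n-1}}\Phi_{\Lambda_n}=\Phi_{\Lambda_{n-2}}$, positivity of the lower-order fundamental functions from Proposition \ref{Prop2}, and Proposition \ref{Prop3} for the first derivative) is exactly the paper's. The difference is in the last substitution. You write
\[
\Phi_{\Lambda_n}''=\Phi_{\Lambda_{n-1}}'+\lambda_n\Phi_{\Lambda_n}'
=\Phi_{\Lambda_{n-2}}+\lambda_{n-1}\Phi_{\Lambda_{n-1}}+\lambda_n\Phi_{\Lambda_n}'
\]
and then eliminate $\Phi_{\Lambda_{n-1}}$ in favour of $\Phi_{\Lambda_n}'$, which creates the term $-\lambda_{n-1}\lambda_n\Phi_{\Lambda_n}$ and forces your case split on the sign of $\lambda_{n-1}\lambda_n$, plus the extra worry about applying Proposition \ref{Prop3} to the shorter vector when $n=2$. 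The paper goes the other way: it expands $\lambda_n\Phi_{\Lambda_n}'=\lambda_n\Phi_{\Lambda_{n-1}}+\lambda_n^2\Phi_{\Lambda_n}$ to obtain
\[
\Phi_{\Lambda_n}''=\Phi_{\Lambda_{n-2}}+\left(\lambda_{n-1}+\lambda_n\right)\Phi_{\Lambda_{n-1}}+\lambda_n^2\,\Phi_{\Lambda_n},
\]
in which every coefficient is manifestly non-negative and $\Phi_{\Lambda_{n-2}}>0$ gives strict positivity in one stroke, with no cases and no appeal to Proposition \ref{Prop3} for the second derivative at all. So your proof works (your Case 2 is sound: the sufficiency half of Proposition \ref{Prop3} only needs one non-negative exponent and extends to vectors of length two via $\Phi_{\Lambda_1}'=e^{\lambda_0 x}+\lambda_1\Phi_{\Lambda_1}$), but you were one substitution away from the cleaner identity; it is worth noticing that expressing $\Phi_{\Lambda_n}''$ in the basis $\Phi_{\Lambda_{n-2}},\Phi_{\Lambda_{n-1}},\Phi_{\Lambda_n}$ rather than in $\Phi_{\Lambda_{n-2}},\Phi_{\Lambda_n}',\Phi_{\Lambda_n}$ is what makes the hypothesis $\lambda_{n-1}+\lambda_n\geq 0$ do all the work. (One small correction: two distinct indices only force $n\geq 1$, not $n\geq 2$; for $n=1$ the statement can fail, e.g.\ $\Lambda_1=(0,0)$ gives $\Phi_{\Lambda_1}''\equiv 0$, so $n\geq 2$ is a genuine implicit hypothesis shared by both proofs, not a consequence of the assumption.)
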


\begin{proof}
If $\lambda_{j}+\lambda_{k}\geq0$ then $\lambda_{j}$ or $\lambda_{k}$ is
$\geq0.$ Then $\Phi_{\Lambda_{n}}^{\prime}\left(  x\right)  >0$ by Proposition
\ref{Prop3}. For the next statement we may assume without loss of generality
that $\lambda_{j}=\lambda_{n}$ and $\lambda_{k}=\lambda_{n-1}.$ Using the
notation $D_{\lambda_{n}}f=f^{\prime}-\lambda_{n}f$ we consider%
\[
D_{\lambda_{n}}D_{\lambda_{n-1}}\Phi_{\Lambda_{n}}=\Phi_{\Lambda_{n}}%
^{\prime\prime}-\left(  \lambda_{n}+\lambda_{n-1}\right)  \Phi_{\Lambda_{n}%
}^{\prime}+\lambda_{n}\lambda_{n-1}\Phi_{\Lambda_{n}}.
\]
Note that
\[
D_{\lambda_{n}}D_{\lambda_{n-1}}\Phi_{\Lambda_{n}}=D_{\lambda_{n}}%
D_{\lambda_{n-1}}\Phi_{\left(  \lambda_{0},...,\lambda_{n}\right)  }%
=\Phi_{\left(  \lambda_{0},...,\lambda_{n-2}\right)  }=:\Phi_{\Lambda_{n-2}}.
\]
Using that $D_{\lambda_{n}}\Phi_{\Lambda_{n}}=\Phi_{\Lambda_{n}}^{\prime
}-\lambda_{n}\Phi_{\Lambda_{n}}$ and $D_{\lambda_{n}}\Phi_{\Lambda_{n}}%
=\Phi_{\Lambda_{n-1}}$ and we see that
\begin{align*}
\Phi_{\Lambda_{n-2}}  &  =\Phi_{\Lambda_{n}}^{\prime\prime}-\left(
\lambda_{n}+\lambda_{n-1}\right)  \left(  \Phi_{\Lambda_{n-1}}+\lambda_{n}%
\Phi_{\Lambda_{n}}\right)  +\lambda_{n}\lambda_{n-1}\Phi_{\Lambda_{n}}\\
&  =\Phi_{\Lambda_{n}}^{\prime\prime}-\lambda_{n}^{2}\Phi_{\Lambda_{n}%
}-\left(  \lambda_{n}+\lambda_{n-1}\right)  \Phi_{\Lambda_{n}-1}\left(
x\right)  .
\end{align*}
It follows that%
\begin{equation}
\Phi_{\Lambda_{n}}^{\prime\prime}=\Phi_{\Lambda_{n-2}}+\lambda_{n}^{2}%
\Phi_{\Lambda_{n}}+\left(  \lambda_{n}+\lambda_{n-1}\right)  \Phi
_{\Lambda_{n-1}}\left(  x\right)  . \label{seconddeerid}%
\end{equation}
By Proposition \ref{Prop2} the functions on the right hand side are
non-negative for $x>0.$
\end{proof}

Note that the identity (\ref{seconddeerid}) can be used to discuss derivatives
of order $3$ and $4$: if $\left(  \lambda_{0},...,\lambda_{n-2}\right)  $
contains a pair $\lambda_{j},\lambda_{k}$ such that $\lambda_{j}+\lambda
_{k}\geq0$ then we can apply the proven statement to the functions on the
right hand side of (\ref{seconddeerid}) and it follows that $\Phi_{\Lambda
_{n}}^{\prime\prime\prime}\left(  x\right)  >0$ and $\Phi_{\Lambda_{n}%
}^{\left(  4\right)  }\left(  x\right)  >0$ for $x>0.$ It is obvious that one
can repeat this process if $\left(  \lambda_{0},...,\lambda_{n-4}\right)  $
contains a pair of frequencies $\lambda_{k}$ and $\lambda_{j}$ with
$\lambda_{k}+\lambda_{j}\geq0.$ This is the case if the vector $\Lambda_{n}$
is \emph{symmetric} which means there exists a permutation $\pi$ of the set
$\left\{  0,...,n\right\}  $ such that $-\lambda_{_{j}}=\lambda_{_{\pi(j)}}$
for $j=0,...,n,$ or, symbolically, $-\Lambda_{n}=\Lambda_{n}.$ Thus we have

\begin{corollary}
Let $\Lambda_{n}=\left(  \lambda_{0},...,\lambda_{n}\right)  \in
\mathbb{R}^{n+1}$ be symmetric. Then $\Phi_{\Lambda_{n}}^{\left(  k\right)
}\left(  x\right)  \geq0$ for all $x\geq0$ and all natural numbers $k.$
\end{corollary}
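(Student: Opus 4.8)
The plan is to induct on the number of derivatives, using the recursion \eqref{seconddeerid} as the engine. The base cases $k=0,1,2$ are already in hand: $\Phi_{\Lambda_n}\geq 0$ on $[0,\infty)$ by Proposition \ref{Prop2} (since $\Lambda_n\in\mathbb{R}^{n+1}$), while $\Phi_{\Lambda_n}^{\prime}>0$ and $\Phi_{\Lambda_n}^{\prime\prime}>0$ for $x>0$ follow from Propositions \ref{Prop3} and \ref{Prop4}, because a symmetric real vector with $n\geq 1$ necessarily contains a non-negative entry and, once $n\geq 2$, contains a pair $\lambda_j,\lambda_k$ with $j\neq k$ and $\lambda_j+\lambda_k=0\geq 0$ (pair each $\lambda$ with its negative; if some $\lambda_j=0$ there are at least two zeros among the entries when $n\geq 1$, or one may pair a zero entry with itself only when forced, so a little care with the degenerate all-zero and single-zero cases is needed, but in every case a valid pair exists). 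For $k\geq 2$ the point is that $\Phi_{\Lambda_n}^{(k)}$ can be written, by iterating \eqref{seconddeerid}, as a non-negative linear combination of $\Phi_{\Lambda_{n-2}}$, $\Phi_{\Lambda_{n-1}}$ and lower-derivative terms, and the sub-vectors inherit symmetry.

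The key steps, in order, would be: (1) observe that if $\Lambda_n$ is symmetric then so is $\Lambda_{n-2}$ obtained by deleting a complementary pair $\lambda_{n-1},\lambda_n$ with $\lambda_{n-1}+\lambda_n=0$ — and more generally, after deleting $2m$ entries forming $m$ complementary pairs, the remaining vector $\Lambda_{n-2m}$ is again symmetric and real; (2) choose the pair $\lambda_j,\lambda_k$ in Proposition \ref{Prop4} to be a complementary pair, so $\lambda_n+\lambda_{n-1}=0$ and \eqref{seconddeerid} collapses to $\Phi_{\Lambda_n}^{\prime\prime}=\Phi_{\Lambda_{n-2}}+\lambda_n^2\Phi_{\Lambda_n}$, a non-negative combination (for $x\geq 0$) of a lower-order fundamental function of a symmetric vector and $\Phi_{\Lambda_n}$ itself; (3) differentiate this identity $k-2$ times to get $\Phi_{\Lambda_n}^{(k)}=\Phi_{\Lambda_{n-2}}^{(k-2)}+\lambda_n^2\Phi_{\Lambda_n}^{(k-2)}$ and close the induction — each summand on the right is a $(k-2)$-nd derivative of a fundamental function of a symmetric real vector (of smaller or equal order), hence non-negative on $[0,\infty)$ by the induction hypothesis; (4) handle the finitely many small-order base cases where there are not enough entries left to keep forming complementary pairs, i.e. when $\Lambda_{n-2m}$ has length $0$ or $1$: a symmetric vector of length $1$ is $(0)$ with $\Phi(x)=x$, whose derivatives are $x,1,0,0,\dots\geq 0$, and the empty case corresponds to $\Phi\equiv$ the constant function, trivially handled.

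The main obstacle I expect is bookkeeping around the degenerate symmetric vectors rather than any analytic difficulty: when $n$ is even, stripping complementary pairs eventually leaves a length-$1$ vector, which must be the single entry $0$ (a self-complementary value), and one must verify $\Phi_{(0)}(x)=x$ has all derivatives $\geq 0$; when $n$ is odd, one is left with a length-$0$ or length-$2$ remnant. A subtler point: to invoke \eqref{seconddeerid} one needs $n\geq 2$ and an \emph{ordered} pair, which is fine, but to iterate one must check that after reindexing $\Lambda_{n-2}$ the hypothesis "$\Lambda_{n-2}$ real and symmetric" genuinely holds — that is immediate since deleting $\{\lambda,-\lambda\}$ from a set equal to its own negation yields a set still equal to its own negation. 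Once these are pinned down, the induction runs mechanically. Alternatively — and perhaps cleanly enough to sidestep the recursion bookkeeping — one notes that $\Phi_{\Lambda_n}^{(n+1)}=L\Phi_{\Lambda_n}+(\text{lower order terms})$; but the recursion approach via \eqref{seconddeerid} is the most direct, so that is the route I would write up.

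\begin{proof}
We argue by induction on $k$, showing $\Phi_{\Lambda_n}^{(k)}(x)\geq 0$ for all $x\geq 0$ whenever $\Lambda_n\in\mathbb{R}^{n+1}$ is symmetric. First note that a real symmetric vector, when it has at least two entries, contains a pair $\lambda_j,\lambda_k$ with $j\neq k$ and $\lambda_j+\lambda_k=0$: pair each entry with the one guaranteed by the symmetry permutation; if that permutation fixes an index $j$ then $\lambda_j=0$, and since $n\geq 1$ there is another index, which (again by symmetry) must also carry value $0$, giving a complementary pair $\{0,0\}$; otherwise a genuine $2$-cycle of the permutation supplies the pair. In particular, for $n\geq 1$ Proposition \ref{Prop3} gives $\Phi_{\Lambda_n}^{\prime}(x)>0$ for $x>0$ (the value $0$ is non-negative, or the pair contains a non-negative entry), and $\Phi_{\Lambda_n}(x)\geq 0$ for $x\geq 0$ by Proposition \ref{Prop2}. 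This settles $k=0,1$; also $k=0$ for $n=0$ is trivial since then $\Phi_{\Lambda_0}(x)=x$.

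Now let $k\geq 2$ and assume the claim holds for all smaller derivative orders and all real symmetric vectors (of any length). If $n\leq 1$ then $\Phi_{\Lambda_n}(x)=x$ (for $n=1$, symmetry forces $\Lambda_1=(0,0)$, and indeed $\Phi_{(0,0)}(x)=x$) or $n=0$ with $\Phi_{\Lambda_0}(x)=x$; in either case $\Phi_{\Lambda_n}^{(k)}\equiv 0$ for $k\geq 2$, which is non-negative. So assume $n\geq 2$ and pick a complementary pair as above; without loss of generality relabel so that $\lambda_{n-1}+\lambda_n=0$. Identity \eqref{seconddeerid} from Proposition \ref{Prop4} (whose hypothesis is met since $\lambda_{n-1}+\lambda_n=0\geq 0$) then reads
\[
\Phi_{\Lambda_n}^{\prime\prime}=\Phi_{\Lambda_{n-2}}+\lambda_n^2\,\Phi_{\Lambda_n}+\left(\lambda_n+\lambda_{n-1}\right)\Phi_{\Lambda_{n-1}}=\Phi_{\Lambda_{n-2}}+\lambda_n^2\,\Phi_{\Lambda_n},
\]
where $\Lambda_{n-2}=\left(\lambda_0,\dots,\lambda_{n-2}\right)$. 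Since the original vector equals its own negation as a multiset and we removed the complementary pair $\{\lambda_{n-1},-\lambda_{n-1}\}$, the vector $\Lambda_{n-2}$ is again real and symmetric. Differentiating the displayed identity $k-2$ times gives
\[
\Phi_{\Lambda_n}^{(k)}=\Phi_{\Lambda_{n-2}}^{(k-2)}+\lambda_n^2\,\Phi_{\Lambda_n}^{(k-2)}.
\]
By the induction hypothesis applied to the symmetric real vector $\Lambda_{n-2}$ (of length $n-1$) and to $\Lambda_n$ at derivative order $k-2$, both terms on the right are non-negative on $[0,\infty)$, hence so is $\Phi_{\Lambda_n}^{(k)}$. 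This completes the induction and the proof.
\end{proof}
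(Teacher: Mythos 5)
Your strategy is the same as the paper's: the corollary is obtained by iterating the identity (\ref{seconddeerid}), using the fact that a symmetric vector keeps supplying pairs $\lambda_{j},\lambda_{k}$ with $\lambda_{j}+\lambda_{k}\geq0$ as entries are stripped off. Your refinement of always choosing a \emph{complementary} pair (one with $\lambda_{j}+\lambda_{k}=0$), so that the $\Phi_{\Lambda_{n-1}}$ term in (\ref{seconddeerid}) drops out and the reduced vector is again symmetric, is a clean way to organize the induction, and your inductive step for $n\geq2$ is correct.

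However, the base cases $n\in\{0,1\}$ are wrong as written, and the recursion $n\mapsto n-2$ genuinely reaches them ($n=1$ for odd $n$, $n=0$ for even $n$). Symmetry of $\Lambda_{1}=(\lambda_{0},\lambda_{1})$ forces only $\lambda_{1}=-\lambda_{0}$, not $\Lambda_{1}=(0,0)$: the vector $(1,-1)$ is symmetric with $\Phi_{(1,-1)}(x)=\sinh x$, so your claim that $\Phi_{\Lambda_{1}}^{(k)}\equiv0$ for $k\geq2$ fails there. The conclusion is still true --- for $\Lambda_{1}=(\lambda,-\lambda)$ one has $\Phi(x)=\sinh(\lambda x)/\lambda$ (or $x$ if $\lambda=0$), whose derivatives $\lambda^{2m-1}\sinh(\lambda x)$ and $\lambda^{2m}\cosh(\lambda x)$ are all non-negative for $x\geq0$ --- but this must actually be checked, since for odd $n$ every application of your recursion ultimately rests on it. Likewise $\Phi_{\Lambda_{0}}(x)=e^{\lambda_{0}x}$, which for the symmetric vector $(0)$ is the constant $1$, not $x$. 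A smaller slip: in arguing that a complementary pair exists you assert that a fixed point of the symmetry permutation forces a second zero entry; that is false (take $(0,1,-1)$), although the existence of a complementary pair in any symmetric vector with at least two entries is still correct (any nontrivial cycle of the permutation supplies one, and if the permutation is the identity all entries vanish). With these base cases repaired, your proof is complete and amounts to a more carefully written version of the paper's own argument.
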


Finally we observe the following necessary condition for the non-negativity of
$\Phi_{\Lambda_{n}}^{\left(  n+1\right)  }\left(  x\right)  .$

\begin{theorem}
If $\Phi_{\Lambda_{n}}^{\left(  n+1\right)  }\left(  x\right)  \geq0$ for all
$x>0$ then $\lambda_{0}+\cdots+\lambda_{n}\geq0.$
\end{theorem}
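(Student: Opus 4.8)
The plan is to compute the single value $\Phi_{\Lambda_{n}}^{(n+1)}(0)$ and identify it with $\lambda_{0}+\cdots+\lambda_{n}$, after which the conclusion follows from a continuity argument. First I would expand the differential operator $L=\prod_{j=0}^{n}\left(\frac{d}{dx}-\lambda_{j}\right)$ into powers of $\frac{d}{dx}$. The coefficient of $\left(\frac{d}{dx}\right)^{n+1}$ is $1$, the coefficient of $\left(\frac{d}{dx}\right)^{n}$ is $-(\lambda_{0}+\cdots+\lambda_{n})$, and every other term involves $\left(\frac{d}{dx}\right)^{k}$ with $k\leq n-1$ (the coefficients being, up to sign, the elementary symmetric functions of $\lambda_{0},\ldots,\lambda_{n}$). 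Thus
\[
0=L\Phi_{\Lambda_{n}}=\Phi_{\Lambda_{n}}^{(n+1)}-\left(\lambda_{0}+\cdots+\lambda_{n}\right)\Phi_{\Lambda_{n}}^{(n)}+\sum_{k=0}^{n-1}c_{k}\,\Phi_{\Lambda_{n}}^{(k)}
\]
for suitable constants $c_{k}$.

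Next I would evaluate this identity at $x=0$ and invoke the defining Cauchy conditions of the fundamental function, namely $\Phi_{\Lambda_{n}}(0)=\cdots=\Phi_{\Lambda_{n}}^{(n-1)}(0)=0$ and $\Phi_{\Lambda_{n}}^{(n)}(0)=1$. All terms containing $\Phi_{\Lambda_{n}}^{(k)}(0)$ for $k\leq n-1$ vanish, and the identity collapses to $0=\Phi_{\Lambda_{n}}^{(n+1)}(0)-\left(\lambda_{0}+\cdots+\lambda_{n}\right)$, i.e. $\Phi_{\Lambda_{n}}^{(n+1)}(0)=\lambda_{0}+\cdots+\lambda_{n}$.

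Finally, since $\Phi_{\Lambda_{n}}\in C^{\infty}(\mathbb{R})$, the derivative $\Phi_{\Lambda_{n}}^{(n+1)}$ is continuous at $0$, so letting $x\to 0^{+}$ in the hypothesis $\Phi_{\Lambda_{n}}^{(n+1)}(x)\geq 0$ yields $\lambda_{0}+\cdots+\lambda_{n}=\Phi_{\Lambda_{n}}^{(n+1)}(0)\geq 0$. There is essentially no obstacle here; the only point requiring care is getting the sign and the position of the $\left(\frac{d}{dx}\right)^{n}$-coefficient in the expansion of $L$ right. (As a sanity check, for $\Lambda_{1}=(-1,-2)$ one has $\Phi_{\Lambda_{1}}''(0)=-3=\lambda_{0}+\lambda_{1}$, consistent with Example \ref{Example0}.)
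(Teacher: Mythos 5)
Your proof is correct and follows essentially the same route as the paper: both arguments reduce the statement to the single identity $\Phi_{\Lambda_{n}}^{(n+1)}(0)=\lambda_{0}+\cdots+\lambda_{n}$ and then conclude by continuity of $\Phi_{\Lambda_{n}}^{(n+1)}$ at $0$. The only difference is how that identity is obtained: the paper quotes the general Taylor-coefficient formula for $\Phi_{\Lambda_{n}}^{(k)}(0)$ from \cite{KoRe13}, whereas you derive the one coefficient you need directly from the expansion of $L$ and the Cauchy data, which is a self-contained and perfectly valid substitute.
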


\begin{proof}
The Taylor series of $\Phi_{\Lambda_{n}}\left(  x\right)  $ can be computed
explicitly (see \cite{KoRe13}): we know $\Phi_{\Lambda_{n}}^{\left(  n\right)
}\left(  0\right)  =1$ and for $k\geq n$ formula
\begin{equation}
\Phi_{\Lambda_{n}}^{\left(  k\right)  }\left(  0\right)  =\sum
_{\substack{\left(  s_{0},\ldots,s_{n}\right)  \in\mathbb{N}_{0}^{n+1}\text{
}\\s_{0}+\cdots+s_{n}+n=k,}}\lambda_{0}^{s_{0}}\cdots\lambda_{n}^{s_{n}}
\label{eqTaylorcoeffphi}%
\end{equation}
holds. For $k=0$, one obtains $\Phi_{\Lambda_{n}}^{\left(  n+1\right)
}\left(  0\right)  =\lambda_{0}+\cdots+\lambda_{n}.$ If $\Phi_{\Lambda_{n}%
}^{\left(  n+1\right)  }\left(  0\right)  <0,$ the first of the Taylor series
of $\Phi_{\Lambda_{n}}^{\left(  n+1\right)  }\left(  x\right)  $ is negative,
hence $\Phi_{\Lambda_{n}}^{\left(  n+1\right)  }\left(  x\right)  <0$ for
$x>0$ sufficiently small, a contradiction.
\end{proof}


\begin{thebibliography}{99}                                                                                               %


\bibitem {Akhi65}Akhiezer, N.I.: \emph{The problem of moments and some related
questions in analysis,} Oliver \& Boyd, Edinburgh, 1965.(Transl. from Russian
ed. Moscow 1961).

\bibitem {AKR07}Aldaz, J. M.; Kounchev, O.; Render, H.: \emph{Bernstein
operators for exponential polynomials.} Constr. Approx. 29 (2009), no. 3, 345--367.

\bibitem {BeGa95}Berenstein C. A.; Gay, R.: \emph{Complex analysis and special
topics in harmonic analysis}, Springer-Verlag, New York, 1995.

\bibitem {CMP04}Carnicer, J.M., Mainar, E., Pe\~{n}a, J.M.: \emph{Critical
Length for Design Purposes and Extended Chebyshev Spaces,} Constr. Approx. 20
(2004), 55--71.

\bibitem {CMP14}Carnicer, J.M., Mainar, E., Pe\~{n}a, J.M.: \emph{On the
critical length of cycloidal spaces}. Constr. Approx. 39 (2014), 573--583.

\bibitem {CMP17}Carnicer, J.M., Mainar, E., Pe\~{n}a, J.M.: \emph{Critical
lengths of cycloidal spaces are zeros of Bessel functions.} Calcolo 54(4)
(2017), 1521--1531.

\bibitem {dynlevinexp}Dyn, N.; Levin, D.; Luzzatto, A.: \emph{Exponentials
reproducing subdivision scheme,} Found. Comput. Math. 3 (2003) 187--206.

\bibitem {DKLR11}Dyn, N.; Kounchev, O.; Levin, D.; Render, H.:
\emph{Regularity of generalized Daubechies wavelets reproducing exponential
polynomials with real-valued parameters, }Appl. Comput. Harm. Anal. 37 (2014) 288--306.

\bibitem {Karl68}Karlin, S.: \emph{Total positivity, Vol. 1,} Stanford Univ.
Press, Standford 1968.

\bibitem {okbook}Kounchev, O.I.: \emph{Multivariate Polysplines. Applications
to Numerical and Wavelet Analysis}, Academic Press, London--San Diego, 2001.

\bibitem {KoReArkiv2010}Kounchev, O.; Render, H.: \emph{A moment problem for
pseudo-positive definite functionals.} Ark Mat 48, 97--120 (2010). 

\bibitem {KoRe13}Kounchev, O.; Render, H.: \emph{Polyharmonic functions of
infinite order on annular regions,} T\^{o}hoku Math. J. 65 (2013), 199--229.

\bibitem {KoReMathNach}Kounchev, O.; Render, H.: \emph{A symmetry property for
polyharmonic functions vanishing on equidistant hyperplanes,} Math. Nachr. 290
(2017), 1087--1096.

\bibitem {KoReInter2019}Kounchev, O.; Render, H.: \emph{Interpolation of data
functions on parallel hyperplanes,} J. Approx. Theory 246 (2019), 43--61.

\bibitem {KounchevRender2021JCAM}Kounchev, O.; Render, H.: \emph{Error
estimates for interpolation with piecewise exponential splines of order two
and four}, J. Comput. App. Math., Vol. 391, 1 August $2021$, $113464.$

\bibitem {KounchevRenderTsachevBIT}Kounchev, O.; Render, H.; Tsachev, Ts.:
\emph{On a class of }$L-$\emph{splines of order }$4$\emph{: fast algorithms
for interpolation and smoothing}, BIT Numerical Mathematics, volume 60, pages
879--899 (2020). .

\bibitem {McCa91}McCartin, B.J.: \emph{Theory of exponential splines,} J.
Approx. Theory 66 (1991), 1--23.

\bibitem {micchelli}Micchelli, Ch.: \emph{Cardinal }$L-$\emph{splines}, In:
Studies in Spline Functions and Approximation Theory, Eds. S. Karlin et al.,
Academic Press, NY, 1976, pp. 203-250.

\bibitem {RamsaySilverman}Ramsay, J.O.; Silverman, B. W.: \emph{Functional
Data Analysis}, Springer Verlag, Second Edition 2005.

\bibitem {Sc81}Schumaker, L.L.: \emph{Spline Functions: Basic Theory},
Interscience, New York, 1981.

\bibitem {Unser}Unser, M.; Blu, T.: \emph{Cardinal Exponential Splines: Part I
-- Theory and Filtering Algorithms,} IEEE Transactions on Signal Processing,
53 (2005), 1425--1438.
\end{thebibliography}
\end{document}